\newtheorem{theorem}{Theorem}[section]
\newtheorem{lemma}[theorem]{Lemma}
\newtheorem{corollary}[theorem]{Corollary}
\theoremstyle{definition}
\newtheorem{assumption}{Assumption}[section]
\newtheorem{definition}{Definition}[section]
\theoremstyle{remark}
\newtheorem{remark}{Remark}[section]
\newcommand\bR{\mathbb{R}}
\newcommand\bP{\mathbb{P}}
\newcommand*{\beq}{\begin{equation}}
\newcommand*{\eeq}{\end{equation}}
\newcommand{\E}{\mathbb{E}}
\newcommand{\bit}{\begin{itemize}}
\newcommand{\eit}{\end{itemize}}
\newcommand\D{\partial}
\newcommand{\supp}{\text{supp}}
\newcommand{\dist}{\text{dist}}
\begin{document}

\title[Symmetrization of exterior problems]{Symmetrization of exterior parabolic problems and probabilistic interpretation}

\author[K. Dareiotis]{Konstantinos Dareiotis
}
\address{Department of Mathematics, Uppsala University, Box 480, 
751 06 Uppsala, Sweden}
\email{konstantinos.dareiotis@math.uu.se}


\begin{abstract}
We prove a comparison theorem for the averages of the  solutions of two exterior parabolic problems,  the second being the ``symmetrization'' of the first one, by using approximation of the Schwarz symmetrization by polarizations, as it was introduced in \cite{SOL}. This comparison provides an alternative proof, based on PDEs,  of the isoperimetric inequality for the Wiener sausage, which was proved in \cite{PS}. 
\end{abstract}

\maketitle

\section{Introduction}
In the present article we prove a comparison theorem for the average in space, at any time $t$, for  the solutions of two parabolic exterior problems, the second being  the ``symmetrization'' of the first one. In order to do so,  we show that the average of the solution decreases under polarization, and since the Schwarz symmetrization  is the limit of compositions of polarizations, we carry the comparison to the limit. This technique was introduced in \cite{SOL}.  

Our result is motivated by a problem in probability theory. Namely,  the isoperimetric inequality for the Wiener sausage, which was proved in \cite{PS}. The problem is the following. If $(w_t)_{t\geq 0}$ is a Wiener process in $\mathbb{R} ^d$, one wants to minimize the expected volume of the set $\cup_{t \leq T} (w_t+A)$, for $T\geq 0$,  over ``all"  subsets $A$ of $\bR ^d$ of a given measure. It was proved in \cite{PS} that the minimizer is the ball (the result was for a more general setting, see Section 2 below).   This was proved by obtaining a similar result for random walks by using 
rearrangement inequalities of Brascamp-Lieb-Luttinger type on the sphere, which were proved in \cite{BS}, and then by  Donsker's theorem, the authors obtain the result for the Wiener process. It is known that the expected volume of the Wiener sausage up to time $t$, can be expressed as the average in $x\in \bR^d$ of the probability that a Wiener process starting from $x \in \bR^d$ hits the set $A$ by time $t$.  It is also known that this collection of probabilities, as a function of $(t,x)$,  satisfies a parabolic equation on  $(0,T) \times \bR^d\setminus A$. For properties of these hitting times and applications to the Wiener sausage we refer the reader to \cite{VDB} and references therein,  and for the case of Riemannian manifolds, we refer to \cite{GRY}.  Therefore, we provide an alternative proof of the isoperimetric inequality for the Wiener sausage, based on PDE techniques. 

Comparison results between solutions of partial differential equations  and solutions of their symmetrized counterparts, were first proved  in \cite{TAL}.  Since then, much work  has been done in this area, for elliptic and parabolic equations, and we refer the reader to \cite{TAFA}, \cite{KES}, \cite{BAN}, \cite{SOL} and references therein.  The equations under consideration at these works, are on a bounded domain, with Dirichlet or Neumann boundary conditions. Our approach is based on the techniques introduced in \cite{SOL}. 

Let us now introduce some notation that will be frequently used throughout the paper.  We denote by $\bR^d$ the Euclidean space of dimension $1 \leq d < \infty$. For $A$, $B$ subsets of $\bR^d$, we write 
$$
A+B :=\{z \in \bR^d \ | \ z= x+y, \ x \in A, \ y \in B\}, 
$$
and for $x \in \bR^d$ we write $x+A:= \{x\}+A$. The open ball of radius $\rho >0$ in $\bR^d$ will be denoted by $B_\rho$. 
Let  $x \in \bR^d$ and $A \subset \bR^d$ and let $H$ be a closed half-space. If $A$ is measurable, $|A|$ will stand for the Lebesgue measure of $A$.  We will write $\sigma_H(x)$ and $A_H$ for the reflections of $x$ and $A$ respectively, with respect to the shifted hyperplane $\D H$.  We will write $\overline{A}$ and $\underline{A}$ for the closure and the interior of $A$ respectively. 
We will use the notation  $P_HA$  for the polarization of $A$ with respect to $H$, that is
$$
P_HA:= \Big( \left( A \cup  A_H   \right) \cap H \Big) \cup \Big(A \cap A_H \Big). 
$$
For a non-negative function $u$ on $\bR^d$ we will write  $P_H u$ for the polarization of $u$ with respect to $H$, that is
$$
 P_H u (x) = \begin{cases}
      \max\{u(x), u(\sigma_H(x))\} , & \text{if} x \in H\\
       \min\{u(x), u(\sigma_H(x))\} , & \text{if} x \in H^c\\
        
        \end{cases}  
$$
We will denote by $\mathcal{H}$ the set of all half-spaces $H$ such that $0 \in H$. For positive functions $f$ and $g$ on $\bR^d$ and for $H \in \mathcal{H}$, we will write $f \lhd_H g$, if $f(x)+f(\sigma_H(x) ) \leq g(x)+g(\sigma_H(x) )$ for a.e. $x\in H$. 
For a bounded set $V \subset \bR^d$, we will denote by $V^*$  the closed,  centered ball of volume $|V|$. For a positive function $u$ on $\bR^d$ such that $|\{u> r \}|< \infty$ for all $r >0$,  we denote by $u^*$ its symmetric decreasing rearrangement.  
For an open set $D \subset \bR^d$ we  denote by  $H^1(D)$ the space of all functions in $u \in L_2(D)$ whose distributional derivatives $\D_i u:=\frac{\D}{\D x_i}u$,  $i=1,..,d$,  lie  in $L_2(D)$, equipped with the norm 
$$
\|u\|^2_{H^1} = \|u\|^2_{L_2}+ \sum_{i=1}^d\| \D_i u\|^2_{L_2}. 
$$
We will write $H_0^1(D)$ for the closure of $C^\infty_c(D)$ (the space of smooth,  compactly supported real functions on $D$) in $H^1(D)$.  
 We will write 
$\mathbb{H}^1(D)$, and $\mathbb{H}^1_0(D)$   for $L_2((0,T);H^1(D))$, and $L_2((0,T);H^1_0(D))$ respectively. Also we define 
,  $\mathscr{H}^1(D):=\mathbb{H}^1(D) \cap C([0,T];L_2(D))$ and 
$\mathscr{H}^1_0(D):=\mathbb{H}^1_0(D) \cap C([0,T];L_2(D))$.
The notation $(\cdot,\cdot)$, will be used for the inner product in $L_2(\bR^d)$. Also, the summation convention with respect to integer valued repeated indices will be in use.

The rest of the article is organized as follows. In  Section 2 we state our main results. In Section 3 we prove a version of a parabolic maximum principle, and some continuity properties of the solution map with respect to the set $A$. These tools are then used in Section 4 in order to prove the main theorems.
\section{Main results}
Let $(\Omega, \mathscr{F},  \bP)$ be a  probability space carrying a standard  Wiener process $(w_t)_{t \geq 0}$ with values in $\bR^d$,  
and let  $A$ be compact subset of $\bR^d$.  For $T \geq 0$ we let us consider the expected volume of the Wiener sausage generated by $A$, that is, the quantity $\E \left| \cup_{t \leq T} \left(w_t+A\right) \right|$.
In \cite{PS},  the following theorem is proved.
\begin{theorem}               \label{thm: wiener sausage}
For any $T \geq 0$ we have
\begin{equation}                                             \label{eq: main inequality}
\E \left| \cup_{t \leq T} \left(w_t+A^*\right) \right|\leq \E \left| \cup_{t \leq T} \left(w_t+A\right) \right|.
\end{equation}
\end{theorem}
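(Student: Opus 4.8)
The plan is to recast the problem as a comparison between the spatial averages of the solutions of two exterior heat problems, to prove that this average does not increase when the obstacle $A$ is replaced by a polarization $P_HA$, and then to let the polarizations converge to the Schwarz symmetrization, as in \cite{SOL}. First, by Tonelli's theorem and the invariance of Brownian motion under $w\mapsto -w$,
\[
\E\Big|\bigcup_{t\le T}(w_t+A)\Big|=\int_{\bR^d}\bP_x(\tau_A\le T)\,dx=\int_{\bR^d}u_A(T,x)\,dx,
\]
where $u_A(t,x):=\bP_x(\tau_A\le t)$, $\tau_A$ is the first hitting time of $A$, and $u_A(t,\cdot)$ has Gaussian decay at infinity so that the integral is finite. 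Classically $v_A:=1-u_A$ is, in the variational sense attached to the spaces $\mathscr H^1,\mathscr H^1_0$ introduced above, the solution of $\D_t v_A=\tfrac12\Delta v_A$ in $(0,T)\times(\bR^d\setminus A)$ with $v_A=0$ on $A$ and $v_A(0,\cdot)=\mathbf 1_{A^c}$. Hence \eqref{eq: main inequality} is equivalent to $\int_{\bR^d}u_{A^*}(T,x)\,dx\le\int_{\bR^d}u_A(T,x)\,dx$. Using the continuity of the solution map $A\mapsto u_A$ (Section~3) I would first reduce to the case where $A$ is a finite union of closed balls, for which $u_A$ is classical away from the obstacle, and approximate $A^*$ likewise.

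The core step is: for every $H\in\cH$,
\[
\int_{\bR^d}u_{P_HA}(T,x)\,dx\le\int_{\bR^d}u_A(T,x)\,dx .
\]
The natural comparison function is the spatial polarization $w(t,\cdot):=P_H\big(u_A(t,\cdot)\big)$. Since the polarization of an indicator is the indicator of the polarized set, $w(0,\cdot)=\mathbf 1_{P_HA}=u_{P_HA}(0,\cdot)$; working through the two cases in the definition of $P_H$ shows $w\equiv 1$ on $P_HA$, matching the obstacle condition; and $w$ and $u_{P_HA}$ both vanish at infinity. If one shows $u_{P_HA}\le w$ pointwise, then for a.e.\ $x\in H$ we get $u_{P_HA}(x)+u_{P_HA}(\sigma_H(x))\le w(x)+w(\sigma_H(x))=u_A(x)+u_A(\sigma_H(x))$, that is $u_{P_HA}\lhd_H u_A$, and integrating this over $H$ (changing variables by $\sigma_H$ on $H^c$) gives the displayed bound.

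The hard part will be precisely the inequality $u_{P_HA}\le w$, and this is why Section~3 is devoted to a tailored parabolic maximum principle: $w=P_Hu_A$ is \emph{not} a sub- or a supersolution of the heat equation off $P_HA$ — on the side $H$ it equals $\max(u_A,u_A\circ\sigma_H)$ and is a subsolution, while on the complementary side it equals $\min(u_A,u_A\circ\sigma_H)$ and is a supersolution — so no classical comparison theorem applies directly. To get around this I would run a two-point (``polarized'') comparison for the pair $(u_{P_HA},u_{P_HA}\circ\sigma_H)$ against $(u_A,u_A\circ\sigma_H)$ on the half-space $H$, using the structural inclusion $\sigma_H\big((P_HA)\cap H^c\big)\subseteq(P_HA)\cap H$ of the polarized obstacle, so that the $\max/\min$ splitting of $w$ across $\D H$ is exactly compensated by the super/subsolution structure of $u_A$ and $u_A\circ\sigma_H$ on the two sides. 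In weak form this amounts to testing the equations against polarized test functions and exploiting that polarization preserves the Dirichlet form, $\int_{\bR^d}|\nabla P_Hf|^2=\int_{\bR^d}|\nabla f|^2$, and does not decrease $L_2$ inner products of nonnegative functions, $\int_{\bR^d}(P_Hf)(P_Hg)\ge\int_{\bR^d}fg$. Verifying that the relevant parabolic-boundary data are $\lhd_H$-ordered and that this ordering then propagates up to $t=T$ is the technical heart of the matter.

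Finally, by the approximation of Schwarz symmetrization by polarizations from \cite{SOL}, there is a sequence $H_1,H_2,\dots\in\cH$ such that the sets $A_n:=P_{H_n}\cdots P_{H_1}A$ satisfy $\mathbf 1_{A_n}\to\mathbf 1_{A^*}$ in $L_1(\bR^d)$. Iterating the core step gives $\int_{\bR^d}u_{A_n}(T,x)\,dx\le\int_{\bR^d}u_A(T,x)\,dx$ for all $n$, while the continuity of $A\mapsto u_A$ from Section~3 gives $\int_{\bR^d}u_{A_n}(T,x)\,dx\to\int_{\bR^d}u_{A^*}(T,x)\,dx$. Letting $n\to\infty$ yields \eqref{eq: main inequality}.
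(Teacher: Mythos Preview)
Your high-level architecture matches the paper exactly: rewrite the sausage volume as $\int u_A(T,x)\,dx$ with $u_A$ the solution of $\Pi(A,0)$, prove that this integral does not increase under a single polarization $A\mapsto P_HA$, iterate, and pass to the Schwarz limit using the stability results of Section~3. Where your proposal parts ways with the paper is in the execution of the polarization step, and this is the part that is not yet a proof.

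You aim for the \emph{pointwise} inequality $u_{P_HA}\le P_H u_A$, and to obtain it you invoke an energy-type argument (``testing against polarized test functions'', ``polarization preserves the Dirichlet form, does not decrease $L_2$ inner products''). None of these facts, true as they are, yields a parabolic comparison: preservation of $\int|\nabla P_Hf|^2$ is a variational identity, not a sub/supersolution statement, and it is unclear how to turn it into $u_{P_HA}\le P_Hu_A$ or even into the weaker relation $u_{P_HA}\lhd_H u_A$ that is all one actually needs. The paper does not attempt the pointwise inequality; it works directly with the symmetric difference $w_t:=v_t+\overline v_t-u_t-\overline u_t$ (where $v=u_{P_HA}$, $u=u_A$, and the bar denotes reflection in $\partial H$), argues by contradiction that $\sup w>0$, and then applies the tailored maximum principle of Lemma~3.1 on carefully chosen subdomains. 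The point is that $w$ is caloric only away from $A\cup A_H$, so one must split $H^c$ into the regions $\Gamma_1,\dots,\Gamma_4$ determined by $A$ and $A_H$, and in each case chase the putative supremum to the hyperplane $\partial H$, where the doubling $w=2\hat w$ forces a contradiction. This geometric case analysis is the actual mechanism; your proposal correctly identifies that something special is needed but does not supply it.

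A smaller point: in the final step you assert ``the continuity of $A\mapsto u_A$ from Section~3 gives $\int u_{A_n}(T,x)\,dx\to\int u_{A^*}(T,x)\,dx$''. Section~3 gives only \emph{weak} $L_2$ convergence $u^n_T\rightharpoonup u_{A^*}(T,\cdot)$, which does not by itself pass to $L_1$ integrals over $\bR^d$. The paper closes this gap via Mazur's lemma (extract a sequence of convex combinations converging a.e.) together with Fatou, which preserves the one-sided inequality; you should do the same. Also, the convergence $A_n\to A^*$ needs both $|A^*\triangle A_n|\to 0$ and $d(A^*,A_n)\to 0$ (Assumption~3.2(iii)); the paper cites \cite{BU} for this, not \cite{SOL}.
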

The result in \cite{PS} is stated for open sets $A$, and  the  set $A$ is allowed to depend on time.  As it was mentioned above, this was proved by obtaining a similar inequality for random walks, using 
rearrangement inequalities of Brascamp-Lieb-Luttinger type on the sphere, which were proved in \cite{BS}, and then by using Donsker's theorem, the authors obtain the inequality for the Wiener process.  

Let us now move to our main result, and see the connection with Theorem \ref{thm: wiener sausage}. 
For a compact set $A \subset \bR^d $, and for $\psi\in L_2(\bR^d\setminus A)$,    let us denote by $\Pi(A,\psi)$ the problem 
\begin{equation} \label{eq:parabolic pde}
  \left\{ \begin{array}{ll}
         dv_t = \frac{1}{2}\Delta v_t \ dt & \mbox{in $(0,T) \times \bR^d\setminus A$};\\
         v_t(x)=1 & \mbox{on $[0,T] \times \partial A$}; \\
         v_0(x) = \psi(x) & \mbox{in $\bR^d\setminus A$} \end{array} \right. 
        \end{equation}
  
  \begin{definition}
  We will say that $u$ is a solution of the problem   $\Pi(A, \psi)$ if
\begin{itemize}
   \item[i)]$u \in     \mathscr{H}^1(\bR^d \setminus A)$, 
  \item[ii)] for each  $\phi \in C^\infty_c(\bR^d \setminus A)$, 
  $$
        (u_t,\phi)=(\psi, \phi)-\int_0^t\frac{1}{2}(\D_iu_s , \D_i \phi) \ ds, 
  $$
  for all $t\in [0,T]$
  \item[iii)]   $v-\xi \in \mathbb{H}^1_0(\bR^d\setminus A)$, for any $\xi \in H^1_0(\bR^d)$ with $\xi =1$ on a compact set $A'$,  $A \subset \underline{A'}$. 
\end{itemize}  
\end{definition}

The following is very well known.

\begin{theorem}
There exists a unique  solution of the problem $\Pi(A,\psi)$. 
\end{theorem}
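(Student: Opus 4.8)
The plan is to reduce $\Pi(A,\psi)$ to a parabolic problem with \emph{homogeneous} Dirichlet data, to which the classical variational (Lions--Galerkin) existence and uniqueness theory applies directly, even though the spatial domain $\bR^d\setminus A$ is unbounded. First I would fix once and for all a cut-off $\xi\in C^\infty_c(\bR^d)$ with $\xi\equiv 1$ on an open neighbourhood of $A$; then $\xi\in H^1_0(\bR^d)$ is admissible in condition (iii), and $f:=\tfrac12\Delta\xi\in C^\infty_c(\bR^d)\subset L_2(\bR^d\setminus A)$. Writing $v=w+\xi$, one checks that $v$ is a solution of $\Pi(A,\psi)$ precisely when $w$ belongs to $\mathscr{H}^1_0(\bR^d\setminus A)$, $w_0=\psi-\xi\in L_2(\bR^d\setminus A)$, and for every $\phi\in C^\infty_c(\bR^d\setminus A)$ and every $t\in[0,T]$
\[
(w_t,\phi)=(\psi-\xi,\phi)-\int_0^t\tfrac12(\D_i w_s,\D_i\phi)\,ds+\int_0^t(f,\phi)\,ds,
\]
i.e.\ $w$ is the weak solution of the heat equation on $\bR^d\setminus A$ with homogeneous Dirichlet condition on $\partial A$, initial value $\psi-\xi$, and source $f$. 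Note that no regularity of $\partial A$ is used: both the notion of solution and the argument are purely variational.

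For the homogeneous problem I would invoke the standard theory: the symmetric form $a(u,u'):=\tfrac12(\D_i u,\D_i u')$ on $H^1_0(\bR^d\setminus A)$ is bounded and satisfies the G\r{a}rding inequality $a(u,u)+\|u\|^2_{L_2}\ge\tfrac12\|u\|^2_{H^1}$, so (by Galerkin approximation, together with the usual substitution $w\mapsto e^{-t}w$ to recover coercivity) there is a unique $w\in\mathbb{H}^1_0(\bR^d\setminus A)$ with $\D_t w\in\mathbb{H}^{-1}(\bR^d\setminus A):=L_2((0,T);H^{-1}(\bR^d\setminus A))$ solving the equation with the prescribed initial value, and such $w$ automatically has a representative in $C([0,T];L_2(\bR^d\setminus A))$ (the Lions--Magenes embedding for the Gelfand triple $H^1_0\hookrightarrow L_2\hookrightarrow H^{-1}$), hence $w\in\mathscr{H}^1_0(\bR^d\setminus A)$. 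Setting $v:=w+\xi$ gives $v\in\mathscr{H}^1(\bR^d\setminus A)$, condition (ii) follows from the display above, and condition (iii) holds: for my particular $\xi$ it is just $v-\xi=w\in\mathbb{H}^1_0(\bR^d\setminus A)$, while for an arbitrary admissible $\eta$ the function $\xi-\eta\in H^1(\bR^d)$ vanishes near $A$ and hence, by a routine mollification-and-truncation argument, lies in $H^1_0(\bR^d\setminus A)$, so that $v-\eta=w+(\xi-\eta)\in\mathbb{H}^1_0(\bR^d\setminus A)$.

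For uniqueness I would take two solutions $v^1,v^2$, subtract their weak formulations (which hold for all $\phi\in C^\infty_c(\bR^d\setminus A)$ and extend by density to all $\phi\in H^1_0(\bR^d\setminus A)$), and observe that $w:=v^1-v^2\in\mathbb{H}^1_0(\bR^d\setminus A)$ satisfies $\D_t w\in\mathbb{H}^{-1}(\bR^d\setminus A)$ with $\langle\D_t w_t,\phi\rangle=-\tfrac12(\D_i w_t,\D_i\phi)$ and $w_0=0$; then the energy identity $\tfrac{d}{dt}\|w_t\|^2_{L_2}=2\langle\D_t w_t,w_t\rangle$ yields $\|w_t\|^2_{L_2}+\int_0^t\|\D_i w_s\|^2_{L_2}\,ds=0$ for all $t$, hence $w\equiv0$. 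Since the statement is, as the paper notes, very well known, I expect no real obstacle: the only slightly delicate point is that $\bR^d\setminus A$ is unbounded, so the Dirichlet form is only G\r{a}rding-coercive rather than coercive, which is standard to handle; the rest --- choosing the lift $\xi$, checking the three conditions, and the density step in the uniqueness argument --- is bookkeeping.
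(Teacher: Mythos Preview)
The paper does not actually prove this theorem: it is stated with the remark ``the following is very well known'' and no argument is given. Your proposal is correct and is precisely the standard variational argument one would expect; in fact the reduction $v\mapsto v-\xi$ to the homogeneous problem $\Pi_0(A,\psi-\xi,\tfrac12\Delta\xi)$ that you carry out is exactly the device the paper itself uses later (see the proof of Corollary~\ref{cor: corollary 1}), so your approach is fully in line with the paper's framework.
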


Our two main results read as follows.

\begin{theorem}
 \label{lem: comparison polarization}
 Let $\psi \in L_2(\bR^d)$ with $0\leq \psi \leq 1$, $\psi=1$ on $A$. 
Let  $u, v $ be the solutions of the problems 
$\Pi(A,\psi)$ and $\Pi(P_HA,P_H\psi)$, extended to 1 on $A$ and $P_HA$ respectively. 
Then   for all $t \in [0,T]$, we have  
$
v_t \lhd_H u_t.
$
\end{theorem}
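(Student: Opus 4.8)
The plan is to compare $u$ and $v$ via an energy/variational argument exploiting that polarization is a rearrangement preserving $L_2$ norms and Dirichlet energies. First I would set $D = \bR^d\setminus A$ and $D' = \bR^d\setminus P_H A$, and reduce everything to functions on $\bR^d$ by extending $u_t$ and $v_t$ by $1$ on $A$ and $P_H A$ respectively (as in the statement), so both live in $1 + \mathbb{H}^1_0$ of the respective exterior domains and solve the heat equation weakly there. The key observation is that since $\psi = 1$ on $A$ and $0\le\psi\le 1$, the maximum principle from Section 3 gives $0 \le u_t \le 1$ and $0\le v_t\le 1$ for all $t$; moreover $P_H\psi = 1$ on $P_H A$, so the initial datum for the second problem is exactly the polarization of that for the first. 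Thus at $t=0$ we have the stronger pointwise fact $v_0 = P_H u_0$, which certainly implies $v_0 \lhd_H u_0$.

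Next I would introduce the "competitor" $w_t := P_H u_t$ and compare it with $v_t$. Because polarization does not increase the Dirichlet integral (a standard fact: $\int |\nabla P_H f|^2 = \int|\nabla f|^2$ for $f$ vanishing appropriately, and in general it does not increase it), and because $P_H$ respects the boundary condition — $P_H A$ is arranged so that $w_t = 1$ on $\partial(P_H A)$ and $w_t \in 1 + \mathbb{H}^1_0(D')$ — the polarized function $w_t$ is an admissible sub/supersolution-type object for the problem $\Pi(P_HA, P_H\psi)$. The cleanest route is to test the weak formulations of the equations for $u$ (against a polarization-adapted test function) and for $v$, and to track the quantity
\[
E(t) := \int_{H}\big( (v_t + v_t\circ\sigma_H) - (u_t + u_t\circ\sigma_H)\big)^{+}\,dx
\]
or, more robustly, to work with the pair $(u_t(x), u_t(\sigma_H x))$ versus $(v_t(x), v_t(\sigma_H x))$ and use the two-point rearrangement inequality that underlies $\lhd_H$. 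One shows $\frac{d}{dt}$ of the relevant discrepancy functional is $\le 0$, using that the Laplacian is in divergence form and that the bilinear form $(\D_i u_s, \D_i\phi)$ interacts monotonically with the two-point symmetrization of the test function. Since the discrepancy vanishes at $t=0$, it stays $\le 0$, giving $v_t \lhd_H u_t$.

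Concretely, I would: (1) verify $w_t := P_H u_t \in \mathscr{H}^1(\bR^d\setminus P_H A)$ with the correct boundary behaviour, using that $u_t \equiv 1$ on $A$ and that $A_H, A$ intersect/union correctly with $H$ — this is where the geometry of $P_H A$ does its work; (2) establish the Dirichlet-energy (non)increase under $P_H$ in the time-integrated sense along the flow; (3) use the weak formulations for $u$ and $v$ together with the fact that for nonnegative $\phi$ supported in $D'$, testing the $u$-equation against an appropriate reflection-symmetrized version of $\phi$ yields an inequality comparing $w_t$ to $v_t$ in the sense of $\lhd_H$; (4) close the argument with a Gronwall-type/monotonicity step starting from $v_0 = P_H u_0$. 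The main obstacle I anticipate is step (1)–(3) interface: making the polarization of the \emph{time-dependent} solution admissible as a competitor while controlling the sign of the energy defect, since polarization is not linear and does not commute with the time derivative, so one cannot simply plug $P_H u$ into the equation — one must argue at the level of the two-point (Steiner-type) inequality for the bilinear form and carefully handle the boundary term on $\partial(P_H A)$. A secondary technical point is justifying all manipulations at the level of weak solutions, for which the maximum principle and the continuity-in-$A$ results from Section 3 (and possibly an approximation of $A$ by smoother sets) will be needed.
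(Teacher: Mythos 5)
Your proposal is a strategy outline rather than a proof, and the gap sits exactly at its center: step (3), the claim that the discrepancy functional $E(t)$ (or the two-point comparison it encodes) is non-increasing along the flow, is asserted but never established, and the obstructions you yourself flag are precisely where the argument must do all of its work. The difficulty is not only that $P_H$ is nonlinear and does not commute with $\partial_t$; it is that $u$ and $v$ are constrained on \emph{different} obstacles ($u\equiv 1$ on $A$, $v\equiv 1$ on $P_HA$), so near $\D A\cup\D A_H\cup\D(P_HA)$ some of the four functions $u$, $u\circ\sigma_H$, $v$, $v\circ\sigma_H$ are frozen at $1$ while the others evolve, the weak formulations live on different exterior domains with different admissible test functions, and no Gronwall-type differentiation of $E(t)$ can be carried out without first controlling the sign of the resulting boundary contributions. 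Controlling those contributions requires pointwise boundary information (e.g.\ that $u=1\geq v$ when one approaches $\D A$ from outside $A_H$), and for general compact $A$ the polarized set $P_HA$ can be so irregular that $v$ is not even continuous up to $\D(P_HA)$ — this is exactly why the paper needed the special maximum principle of Lemma \ref{lem: maximum principle}. Also note that the energy identity $\int|\nabla P_Hf|^2=\int|\nabla f|^2$ by itself cannot yield the conclusion: $\lhd_H$ is a pointwise two-point inequality, not an energy comparison, and admissibility of $P_Hu_t$ as a ``competitor'' is of no direct use since the parabolic problem is not a time-slice-wise variational minimization.

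For comparison, the paper's proof avoids any differentiation of a discrepancy functional. In the smooth case ($\D A$ smooth, $\psi$ smooth and compactly supported, so $u$ is continuous up to $\D A$ with value $1$, while $v$ is only known to be continuous inside), it argues by contradiction: assuming $\sup_{(0,T]}\sup_{H^c}\big(v_t+v_t\circ\sigma_H-u_t-u_t\circ\sigma_H\big)=\alpha>0$, it splits $H^c$ into the four regions determined by $A$ and $A_H$, applies Lemma \ref{lem: maximum principle} to $v-u\circ\sigma_H$, $v-u$, or the full difference on suitable subdomains, uses $P_H\psi\lhd_H\psi$, the decay at infinity, the boundary value $1$ of $u$, and the reflection symmetry across $\D H$ (where the difference equals twice $v-u\circ\sigma_H$) to reach a contradiction. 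General $A$ and $\psi$ are then handled by approximating with decreasing smooth compact sets and smooth data, and passing to the limit using Corollary \ref{cor: corollary 1}, Mazur's lemma and a.e.\ convergence of convex combinations — an approximation scheme your sketch only mentions in passing. If you want to salvage your route, you would need to prove a genuine two-point parabolic comparison lemma valid across the obstacle boundaries for merely weak solutions, which is essentially the content of the paper's maximum-principle argument.
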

 
\begin{theorem}                                  \label{thm: main theorem 1}
Let $\psi \in  L_2(\bR^d)$ with $0 \leq \psi \leq 1$, and $\psi=1$ on $A$. Suppose that $|A|>0$.   Let $u$, $v$ be the solutions of the problems $\Pi(A, \psi)$
and $\Pi(A^*, \psi^*)$ respectively . Then for any $t \in [0,T]$ we have
\begin{equation}                              \label{eq: inequality main theorem 1}
\int_{\bR^d} v_t  \  dx \leq \int_{\bR^d} u_t  \ dx ,
\end{equation}
where $u_t$ and $v_t$ are extended to $1$ on $A$ and $A^*$ respectively. 
\end{theorem}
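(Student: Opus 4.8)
The plan is to realise the pair $(A^*,\psi^*)$ as a limit of polarizations of $(A,\psi)$, to use Theorem~\ref{lem: comparison polarization} to make the spatial averages non-increasing along the approximating sequence, and then to pass to the limit by means of the continuity of the solution map with respect to the obstacle set proved in Section~3.

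To build the approximating sequence, note first that the class of data is stable under polarization: for $\psi$ as in the statement and any $H\in\mathcal H$, the function $P_H\psi$ still satisfies $0\le P_H\psi\le1$ and equals $1$ on the compact set $P_HA$. To deal with $A$ and $\psi$ at once I would apply the polarization approximation of \cite{SOL} to the single auxiliary function $f:=\mathbf 1_A+\psi$. Since $\psi\equiv1$ on $A$, the function $f$ takes its values in $\{2\}\cup[0,1]$, the value $2$ being attained exactly on $A$; hence $\psi=\min(f,1)$ and $\mathbf 1_A=(f-1)_+$, and one checks the pointwise identities $P_Hf=\mathbf 1_{P_HA}+P_H\psi$ and $f^*=\mathbf 1_{A^*}+\psi^*$ (here the hypothesis $|A|>0$ ensures that $A^*$ is a ball of positive radius, so that $\Pi(A^*,\psi^*)$ is genuinely an exterior problem). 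Choosing half-spaces $H_1,H_2,\dots\in\mathcal H$ with $f_k:=P_{H_k}\cdots P_{H_1}f\to f^*$ in $L_2(\bR^d)$, and setting $A_k:=P_{H_k}\cdots P_{H_1}A$ and $\psi_k:=P_{H_k}\cdots P_{H_1}\psi$, one obtains inductively that $f_k=\mathbf 1_{A_k}+\psi_k$ with $\psi_k\equiv1$ on $A_k$, so that $\psi_k=\min(f_k,1)$ and $\mathbf 1_{A_k}=(f_k-1)_+$. As $t\mapsto\min(t,1)$ and $t\mapsto(t-1)_+$ are $1$-Lipschitz, we get $\psi_k\to\psi^*$ and $\mathbf 1_{A_k}\to\mathbf 1_{A^*}$ in $L_2(\bR^d)$, and in particular $|A_k\triangle A^*|\to0$.

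Now let $u_k$ be the solution of $\Pi(A_k,\psi_k)$, extended by $1$ on $A_k$, so that $u_0=u$. Applying Theorem~\ref{lem: comparison polarization} with $(A,\psi,H)=(A_{k-1},\psi_{k-1},H_k)$ (its hypotheses being preserved under polarization, as noted above) gives $(u_k)_t\lhd_{H_k}(u_{k-1})_t$ for every $t\in[0,T]$. Integrating the inequality defining $\lhd_{H_k}$ over $x\in H_k$ and using that $\sigma_{H_k}$ is a measure-preserving involution interchanging $H_k$ and $H_k^c$, we obtain $\int_{\bR^d}(u_k)_t\,dx\le\int_{\bR^d}(u_{k-1})_t\,dx$ for all $t$, whence, by iteration, $\int_{\bR^d}(u_k)_t\,dx\le\int_{\bR^d}u_t\,dx$ for all $k$ and all $t\in[0,T]$.

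Finally, by the convergences in the second paragraph and the continuity of the solution map established in Section~3, $u_k$ converges, in the appropriate topology, to the solution $v$ of $\Pi(A^*,\psi^*)$; in particular, for each fixed $t$ one has $(u_k)_t\to v_t$ in $L_2(\bR^d)$, hence a.e. along a subsequence. Since $u_k\ge0$, Fatou's lemma together with the previous paragraph yields $\int_{\bR^d}v_t\,dx\le\liminf_k\int_{\bR^d}(u_k)_t\,dx\le\int_{\bR^d}u_t\,dx$, which is \eqref{eq: inequality main theorem 1}. The main difficulty is precisely this last step: one has to make sure that the mode of convergence $A_k\to A^*$ coming out of the polarization approximation is exactly one under which $\Pi(\cdot,\cdot)$ depends continuously on the obstacle, so that the limit of the $u_k$ really solves the exterior problem on $\bR^d\setminus A^*$ and not a problem on a smaller or distorted domain; this is what the maximum principle and continuity results of Section~3, together with the non-degeneracy assumption $|A|>0$, are meant to supply.
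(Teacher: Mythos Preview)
Your overall strategy---iterate Theorem~\ref{lem: comparison polarization} along a sequence of polarizations, then pass to the limit using the stability results of Section~3 and Fatou---is exactly the paper's. The device of encoding the pair $(A,\psi)$ in the single function $f=\mathbf 1_A+\psi$ and applying the $L_2$ approximation of \cite{SOL} to $f$ is a neat variant; it indeed yields $\psi_k\to\psi^*$ in $L_2$ and $|A_k\triangle A^*|\to 0$.

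The gap is precisely the one you flag in your last sentence, and it is not resolved by your construction. From $f_k\to f^*$ in $L_2$ you only obtain convergence of $A_k$ to $A^*$ in measure, \emph{not} in Hausdorff distance. But the continuity result you need, Corollary~\ref{cor: corollary 2} (via Lemma~\ref{lem: convergence 2}), requires $d(A_k,A^*)\to 0$: this is what guarantees that every $\phi\in C_c^\infty(\bR^d\setminus A^*)$ is eventually an admissible test function for $\Pi(A_k,\psi_k)$, so that the weak limit satisfies the equation on $\bR^d\setminus A^*$. Convergence in measure alone does not rule out thin spikes of $A_k$ wandering into the support of $\phi$; for instance, nothing prevents $A_k$ from being $A^*$ together with a ball of radius $1/k$ at a fixed positive distance from $A^*$. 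The paper closes this gap by invoking \cite{BU}, which produces a single sequence of half-spaces along which $\psi_k\to\psi^*$ in $L_2$, $|A_k\triangle A^*|\to 0$, \emph{and} $d(A_k,A^*)\to 0$ simultaneously.

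A minor point: Section~3 gives only \emph{weak} $L_2$ convergence of $(u_k)_t$ to $v_t$, not strong, so ``hence a.e.\ along a subsequence'' is not immediate. The paper inserts Mazur's lemma here: pass to convex combinations converging strongly, extract an a.e.\ convergent subsequence, and note that the integral inequality $\int (u_k)_t\,dx\le\int u_t\,dx$ is preserved under convex combinations, so Fatou still applies.
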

 It is easy to check that 
$$
\E \left|  \cup_{t \leq T} \left(w_t+A\right) \right| = \int_{\bR^d} \bP( \tau_A^x \leq t) \ dx.
$$
where
$$
\tau_A^x:=\inf\{ t \geq 0 : x+ w_t \in A \}.
$$
It is also known that the unique solution of the problem $\Pi(A,0)$ is given by 
\begin{equation}                         \label{eq: representation of solution}
u_t(x)= \bP(\tau^x_A \leq t).
\end{equation}
Consequently Theorem \ref{thm: wiener sausage} follows by Theorem \ref{thm: main theorem 1} by choosing $\psi=0$,  if $|A|>0$. If $|A|=0$ then \eqref{eq: main inequality} trivially holds. 

\begin{remark}
All of the arguments in the next sections can be repeated in exactly the same way,  if the operator $\frac{1}{2}\Delta$ is replaced by an operator of the form $L_t  u:= \D_i (a^{ij}_t \D_ju)$, 
such that for $ j,i \in \{1,...,d\}$, $a^{ij} \in L_\infty((0,T))$, and there exists a constant $\kappa>0$ such that for almost all $t \in [0,T]$, 
\begin{equation}                        \label{eq: non degeneracy}
a^{ij}_t z_iz_j \geq \kappa |z|^2,
\end{equation}
for all $z=(z_1,...,z_d)   \in \bR^d $.   Consequently one can replace $w_t$ in Theorem \ref{thm: wiener sausage} by ``non-degenerate'' stochastic integrals  of the form
$ y_t =\int_0^t \sigma_s  \ d B_s $
where $B_t$ is an $m$-dimensional Wiener process and $\sigma$ is a measurable function from $[0,T]$ to the set of $d\times m$ matrices such that $(\sigma_t \sigma^\top_t)_{i,j=1}^d$ satisfies  \eqref{eq: non degeneracy}. 
\end{remark}

\section{Auxiliary Results}
In this section we prove some tools that we will need  in order to obtain the proof our main theorems. Namely, we present a version of the parabolic maximum principle for functions that are not necessarily continuous up to the parabolic boundary. The maximum principle is the main tool used in order to show the comparison of the solution of the problem $\Pi(A, \psi)$ and its polarized version. The reason that we need this version of the maximum principle is that, $P_H A$ is not guaranteed to have any ``good'' properties, even if $\D A$ is  of class $C^ \infty$, and therefore one can not expect the solution of $\Pi(P_HA, P_H \psi)$ to be continuous up to the boundary. We also  present some continuity properties of the solution map with respect to the set $A$, so that we can then iterate Theorem \ref{lem: comparison polarization} with respect to a sequence of half-spaces and pass to the limit, in order to obtain Theorem \ref{thm: main theorem 1}.  

In this section we consider $a^{ij} 
\in L_\infty((0,T) \times \bR^d)$ for $i,j=1,...,d$, and we assume that there exists a constant $\kappa >0$ such that for any $z=(z_i,...,z_d) \in \bR^d$ we have
$$
 a^{ij}_t(x)z_i z_j \leq \kappa |z|^2,
$$
for a.e. $(t,x)\in[0,T] \times \bR^d$.  We will  denote by $K:= \max_{i,j} \|a^{ij}\|_{L_\infty}$. For an open set $Q \subset \bR^d$, let 
  $\Psi(Q)$ be the set of functions  $u \in \mathscr{H}^1(Q)$, such that for any $\phi \in C^\infty_c(Q)$
\begin{equation}                                  \label{eq: harmonic}
(u_t,\phi)=(u_0,\phi)-\int_0^t (a^{ij}_s\D_iu_s, \D_i \phi) \  ds,
\end{equation}
for all $t\in [0,T]$. 
Notice that by the De Giorgi-Moser-Nash theorem, if $u \in \Psi(Q)$, then $u \in C((0,T)\times Q)$. 

Let us also  introduce  the functions  $\alpha_r(s)$, 
$\beta_r(s)$ and $\gamma_r(s)$ on 
$\mathbb{R}$, for  $r>0$, that will be needed in the next lemma,   given by
\begin{equation}
\gamma_ r (s)= \left\{
\begin{array}{rl}
2 & \text{if } s > r \\
\frac{2s}{r} & \text{if } 0 \leq s \leq r \\
0 & \text{if } s < 0,
\end{array} \right. \nonumber
\end{equation}
\[ \beta _r (s) = \int_0^s \gamma_r (t) \ dt,
\qquad \alpha_r(s)=\int_0^s \beta (t) \ dt.\]
For all $s \in \mathbb{R}$ we have $\gamma_r(s) \to 2I_{s >0}$, 
$ \beta_r(s) \to 2s_+$ 
and $\alpha_r(s) \to (s_+)^2$ as $r \to 0$. 
Also, for all $s\in\bR$ and $r>0$,  
the following inequalities hold
\[|\gamma_r(s)|\leq 2, \ |\beta_r(s)| 
\leq 2|s|, \ |\alpha_r(s)| \leq s^2.\]

\begin{lemma}                             \label{lem: maximum principle}
Let $Q$ be a  bounded open set and let $u\in \Psi(Q)$. Suppose that  there exists  $M\in \bR$, such that $u_0(x)\leq M$ for a.e. $x \in Q$ and $\limsup_{(t,x) \to (t_0, x_0)} u_t(x) \leq M$ for any $(t_0,x_0) \in (0,T] \times \D Q$, then
$$
\sup_{t \in [0,T]} \sup_{Q} u_t(x)  \leq M. 
$$ 
\end{lemma}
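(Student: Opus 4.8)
The plan is to run the standard energy/De Giorgi test-function argument adapted to the fact that $u$ need not be continuous up to $\partial Q$. Since we only control $u$ near the lateral boundary through a $\limsup$, the first step is to localize away from $\partial Q$. Fix $\varepsilon>0$. By the hypothesis $\limsup_{(t,x)\to(t_0,x_0)}u_t(x)\le M$ for every $(t_0,x_0)\in(0,T]\times\partial Q$, together with a compactness argument on $\partial Q$, there is an open neighborhood $U$ of $\partial Q$ in $\bR^d$ such that $u_t(x)\le M+\varepsilon$ for a.e.\ $(t,x)\in (0,T]\times (U\cap Q)$; here I also use that $u\in C((0,T)\times Q)$ by the De Giorgi--Moser--Nash theorem quoted above, and I deal with the behavior as $t\to 0$ separately using $u_0\le M$ a.e.\ and $u\in C([0,T];L_2(Q))$. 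Set $w:=(u-M-\varepsilon)_+$; then $w$ is supported, for each $t$, in a set whose closure is a compact subset of $Q$ (the complement of $U$), so $w_t\in H^1_0(Q)$, and in fact $w\in \mathbb{H}^1_0(Q)$, with $w_0=0$.

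The second step is to test the weak formulation \eqref{eq: harmonic} against $\phi=\beta_r(u_t-M-\varepsilon)$ (appropriately, against a sequence of $C^\infty_c(Q)$ functions approximating it, using that $w_t$ has compact support in $Q$) and integrate. The parabolic term yields, after letting $r\to 0$ and using the properties $\beta_r(s)\to 2s_+$, $\alpha_r(s)\to(s_+)^2$ listed before the lemma, an expression of the form $\frac{d}{dt}\int_Q (u_t-M-\varepsilon)_+^2\,dx$ (this is the usual chain-rule identity for the $L_2$-norm of $w_t$, valid because $u$ is an $\mathscr H^1$ solution; the mollification in $t$ needed to justify it is routine). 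The elliptic term contributes $-\int_Q a^{ij}_s\,\D_i u_s\,\D_j\big(\beta_r(u_s-M-\varepsilon)\big)\,ds = -\int_Q \gamma_r(u_s-M-\varepsilon)\,a^{ij}_s\,\D_i u_s\,\D_j u_s\,ds$, which is $\le 0$ since $\gamma_r\ge 0$ and $a^{ij}$ is nonnegative-definite (here the upper bound $a^{ij}_t(x)z_iz_j\le\kappa|z|^2$ and, implicitly, nonnegativity/the ellipticity structure are what is used — on the half-Laplacian this is immediate; for general $a^{ij}$ one uses that the quadratic form is a.e.\ nonnegative).

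The third step is the conclusion. Combining the two terms gives $\frac{d}{dt}\int_Q (u_t-M-\varepsilon)_+^2\,dx\le 0$ for a.e.\ $t$, and since $\int_Q(u_0-M-\varepsilon)_+^2\,dx=0$, Gronwall (trivially, monotonicity) forces $\int_Q(u_t-M-\varepsilon)_+^2\,dx=0$ for all $t\in[0,T]$, i.e.\ $u_t\le M+\varepsilon$ a.e.\ in $Q$ for every $t$. Letting $\varepsilon\downarrow 0$ gives $\sup_{t\in[0,T]}\operatorname{ess\,sup}_Q u_t\le M$; upgrading $\operatorname{ess\,sup}$ to $\sup$ on $(0,T)\times Q$ uses the interior continuity of $u$, and the endpoints $t=0,T$ are handled by $u_0\le M$ a.e.\ and $t$-continuity into $L_2$.

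The main obstacle is the first step: extracting, purely from the pointwise $\limsup$ condition on the lateral boundary, a genuine neighborhood $U$ of $\partial Q$ on which $u\le M+\varepsilon$ a.e., uniformly in $t$. One must be careful that the $\limsup$ is taken in $(0,T]\times Q$ (so $t$ bounded away from $0$ is not required at the boundary, but $t=0$ is excluded), reconcile this with the initial condition $u_0\le M$ near $\partial Q$, and combine a covering argument on the compact set $\partial Q$ with the interior continuity to get the uniform bound; once $w:=(u-M-\varepsilon)_+$ is known to lie in $\mathbb H^1_0(Q)$ with $w_0=0$, the remaining energy estimate is routine.
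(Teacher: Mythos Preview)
Your proposal is correct and follows essentially the same route as the paper. Both arguments hinge on the same key observation: for any $\delta>0$ (your $\varepsilon$) and any $t'\in(0,T)$, the set where $u>M+\delta$ stays a positive distance from $\partial Q$ uniformly for $t\in[t',T]$, so that $(u-M-\delta)_+$ is an admissible test function; after that, the energy identity with the $\alpha_r,\beta_r,\gamma_r$ regularization gives $\|(u_t-M-\delta)_+\|_{L_2(Q)}^2\le\|(u_{t'}-M-\delta)_+\|_{L_2(Q)}^2$, and one sends $t'\downarrow 0$ and $\delta\downarrow 0$.

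Two small points of comparison. First, your ``neighborhood $U$ of $\partial Q$ on which $u\le M+\varepsilon$'' is exactly the paper's statement $\inf_{t\in[t',T]}\dist(\overline{U^\delta_t},\partial Q)>0$, rephrased; the paper obtains it by a short contradiction/sequence argument rather than by an explicit covering, and crucially it works only on $[t',T]$, not on all of $(0,T]$, since $(0,T]\times\partial Q$ is not compact --- your remark about handling $t\to 0$ separately via $u\in C([0,T];L_2(Q))$ is precisely how the paper closes this gap. Second, where you say ``approximate $\beta_r(u-M-\varepsilon)$ by $C^\infty_c(Q)$ functions'', the paper makes this explicit by spatially mollifying $u$ and multiplying by a cutoff $g_n\in C^\infty_c(Q)$; the compact-support step then appears as the vanishing of the terms containing $\partial_i g_n$. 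These are the same computation in different clothing.
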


\begin{proof}
Let us fix $t' \in (0,T)$,  and let $\zeta \in C^\infty_c(B_1)$ be a positive function with unit integral.  For  $\varepsilon>0$ and $\delta>0$, set $\zeta^\varepsilon(x)= \varepsilon^{-1} \zeta(x/ \varepsilon)$ and $M^\delta:=M+\delta$. For  $x \in Q^\varepsilon:= \{ x \in Q | \dist(x,  \D Q) > \varepsilon\}$, we can plug $\zeta^\varepsilon(x-\cdot)$ in \eqref{eq: harmonic} in place of $\phi$ to obtain
$$
u^\varepsilon _t(x)-M^\delta = u^\varepsilon _ {t'}(x)-M^\delta +\int_{t'}^t (a^{ij}_s \D_j u_s, \D_i\zeta^\varepsilon (x- \cdot))  \ ds, 
$$
for all $t \in [t',T]$, where $u^\varepsilon=u * \zeta^\varepsilon$. Let also $g^n \in C^\infty_c(Q)$ with $0 \leq g^n \leq 1$, $g^n=1$ on $Q^{1/n}$, $g^n=0$ on $Q\setminus Q^{1/2n}$ and choose $\varepsilon < 1/2n$. We can then multiply the equation with $g^n$, and by the chain rule we have
\begin{align}
\nonumber 
\int_Q \alpha_r((u^\varepsilon_t-M^\delta)g_n)  \ dx =& \int_Q \alpha_r((u^\varepsilon_{t'}-M^\delta)g_n)  \ dx
\\   \nonumber
-&\int_{t'}^t \int_Q (a^{ij}_s\D_j u_s)^\varepsilon \D_i (g_n \beta_r((u^\varepsilon_s-M^\delta)g_n) \ dx  ds.
\end{align}
By standard arguments (see e.g. \cite{KI}), letting $ \varepsilon \to 0$,   leads to 
\begin{align}          \label{eq:Ito away the boundary}
\nonumber 
\int_Q \alpha_r((u_t-M^\delta)g_n) \ dx =& 
\int_Q \alpha_r((u_{t'}-M^\delta)g_n) \ dx
\\  \nonumber
-&\int_{t'}^t \int_Q g_n^2a^{ij}_s\D_j u_s  \gamma_r(u_s-M^\delta) \D_i (u_s-M^\delta)  \ dx ds
\\ \nonumber 
-&\int_{t'}^t \int_Q a^{ij}_s\D_j u_s \D_i g_n \beta_r((u_s-M^\delta)g_n)  \ dx  ds \\
-&\int_{t'}^t \int_Q a^{ij}_s\D_j u_s  \gamma_r((u_s-M^\delta)g_n) (u_s-M^\delta)g_n\D_i g_n\ dx  ds
\end{align}
Let us also introduce the notation
$$
U^\delta_t=\{x \in Q | \ u_t(x) > M+\delta \}. 
$$
We claim that there exists $\rho>0$ such that 
$\dist(\overline{U^\delta _t}, \D Q) > \rho$ for any $t \in [{t'},T]$. For each $t\in [{t'},T]$, we have $\overline{U^\delta _t} \subset Q \cup \D Q$. Suppose now that there exists $z \in \overline{ U^\delta _t} \cap \D Q$. By the definition  of $U^\delta _t$ we have that 
$$
\limsup_{Q \ni x \to z} u_t(x) \geq M+\delta,
$$
 while by assumption we have that 
 $$
 \limsup_{Q \ni x \to z} u_t(x) \leq M,
 $$
  which is a contradiction, and therefore $\overline{U^\delta_t} \subset Q$ which means that $\dist(\overline{U^\delta_t}, \D Q ) >0$ (the sets are compact).  If  $\inf_{t\in [{t'},T]}\dist(\overline{U^\delta_t}, \D Q)= 0$,   we can find $(s,y) \in [{t'},T] \times \D Q$,  and a sequence $(t_n,x_n) \in [{t'},T] \times U^\delta_{t_n}$ such that $(t_n,x_n) \to (s,y)$ as $n \to \infty$. Then we have by the definition of $U^\delta_{t_n}$, 
 $$
\limsup_{ (x_n,t_n) \to (s,y)} u_{t_n}(x_n) \geq M+\delta,
$$
while by assumption again we have that 
 $$
\limsup_{ (x_n,t_n) \to (s,y)} u_{t_n}(x_n) \leq M,
$$
which is a contradiction, and therefore 
\begin{equation}                     \label{eq:away form the boundary}
\inf_{t\in [{t'},T]}\dist(\overline{U^\delta_t}, \D Q)= \theta >0.
\end{equation}
 Going back to \eqref{eq:Ito away the boundary}, for any $n > 1/\theta$, we have that for all $s \in [{t'},T]$
$$
\int_Q \D_i u_s \D_i g_n \beta_r((u_s-M^\delta)g_n)  \ dx  =\int_{U^\delta_s}  \D_i u_s \D_i g_n \beta_r((u_s-M^\delta)g_n)  \ dx =0,
$$
since $\D_ig_n=0$ on $Q^{1/n}$ and $U^\delta_s \subset Q^{1/n}$  by \eqref{eq:away form the boundary}. Similarly for the last term on the right hand side of  \eqref{eq:Ito away the boundary}. Therefore, letting  $n \to \infty$  and $r\to 0$ in \eqref{eq:Ito away the boundary} gives
\begin{align}
\nonumber 
\|(u_t-M^\delta)_+ \|^2_{L_2(Q)} &=\|(u_{t'}-M^\delta)_+ \|^2_{L_2(Q)} -\int_{t'}^t \int_Q |\D_i u_s|^2  I_{u_s>M^\delta}  \ dx ds  \\ \nonumber 
& \leq \|(u_{t'}-M^\delta)_+ \|^2_{L_2(Q)} . 
\end{align}
The above inequality holds for any $t'\in (0,T]$, and therefore by letting $t' \downarrow 0$ and using the continuity of $u$ (in $L_2(Q)$) we have 
$$
\|(u_t-M^\delta)_+ \|^2_{L_2(Q)} \leq \|(u_0-M^\delta)_+ \|^2_{L_2(Q)}  \leq 0,
$$
since $u_0 \leq M$. 
Consequently 
$$
\sup_{Q} u_t(x) \leq M+\delta,
$$
for any $t \in [0,T]$. 
Since $\delta$ was arbitrary, the lemma is proved. 
\end{proof}
We now continue with the continuity properties of the solution map. 
Lets us fix  $\xi\in L_2(\bR^d)$ and $f \in L_2((0,T) \times \bR^d)$. We will say that $u$ solves the problem   $\Pi_0(A, \xi, f)$ if
\begin{itemize}
   \item[i)]$u \in     \mathscr{H}_0^1(\bR^d \setminus A)$,  and
  \item[ii)] for each  $\phi \in C^\infty_c(\bR^d \setminus A)$, 
  $$
        (u_t,\phi)=(\xi, \phi)+\int_0^t \left((f_s, \phi)-(a^{ij}_s\D_iu_s , \D_j \phi) \right) \ ds, 
  $$
  for all $t\in [0,T]$. 
\end{itemize}  
   
   For $n \in \mathbb{N}$, let  $\xi^n \in L_2(\bR^d)$, $f^n \in L_2((0,T) \times \bR^d)$ and let $A_n \subset \bR^d$ be compact sets.
   \begin{assumption}                \label{as: decreasing convergence}
   $ $
   \begin{itemize}
   \item[(i)] $\xi^n \to \xi$ weakly in $L_2(\bR^d)$
   \item [(ii)]$ f^n \to f $ weakly in $ L_2([0,T]; L_2(\bR^d))$ 
   \item [(iii)] $A_{n+1} \subset A_n$  For each $n \in \mathbb{N}$, and $ \cap_n A_n=A$. 
   \end{itemize}
   \end{assumption}
    
   \begin{lemma}                      \label{lem:convergence}
Suppose Assumption \ref{as: decreasing convergence} holds, and let    $u^n$ and $u$ be the solutions of the problems $\Pi_0(A_n, \xi^n,f^n)$ and $\Pi_0(A, \xi,f)$ respectively .  Let us extend $u^n$ and $u$ to zero  on $A_n$ and $A$ respectively. Then
\begin{itemize}
\item[i)]$u^n \to u$ weakly in $\mathbb{H}^1_0(\bR^d)$ as $n \to \infty$, 

\item[ii)]  $u^n_t \to u_t$, weakly in $L_2(\bR^d)$ as $n \to \infty $,  for any $t \in [0,T]$. 
\end{itemize}   
\end{lemma}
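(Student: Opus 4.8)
The plan is to run the standard weak-compactness argument for linear parabolic equations; the only point that genuinely uses the geometric hypothesis $\cap_n A_n=A$ is the passage to the limit in the weak formulation.

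\textbf{Step 1 (uniform bounds and a candidate limit).} Weak convergence of $\xi^n$ and $f^n$ makes $\|\xi^n\|_{L_2(\bR^d)}$ and $\|f^n\|_{L_2((0,T)\times\bR^d)}$ bounded uniformly in $n$. From the standard energy identity for such weak solutions (obtained by the mollification argument of \cite{KI}, as in the proof of Lemma \ref{lem: maximum principle}), the uniform ellipticity of $(a^{ij})$ and Gr\"onwall's inequality, one gets
$$
\sup_{t\in[0,T]}\|u^n_t\|_{L_2(\bR^d)}^2+\int_0^T\|\partial_i u^n_s\|_{L_2(\bR^d)}^2\,ds\le C
$$
with $C$ independent of $n$, the zero extensions $u^n$ lying in $\mathbb{H}^1_0(\bR^d)\cap C([0,T];L_2(\bR^d))$. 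Passing to a subsequence (not relabelled) we obtain $u^n\rightharpoonup\bar u$ weakly in $\mathbb{H}^1_0(\bR^d)$. Moreover, for every fixed $\phi\in C^\infty_c(\bR^d\setminus A)$ the scalar functions $t\mapsto(u^n_t,\phi)$ are, for all large $n$, uniformly bounded and equicontinuous on $[0,T]$ (the modulus of continuity coming from the weak formulation, the Cauchy--Schwarz inequality and the bound above); hence, by the Arzel\`a--Ascoli theorem and a diagonal extraction over a countable dense set of such $\phi$, together with the uniform $L_2$ bound and the fact that $u^n_t=0$ on $A_n\supset A$, we may also assume $u^n_t\rightharpoonup\bar u_t$ weakly in $L_2(\bR^d)$ for \emph{every} $t\in[0,T]$, where $t\mapsto\bar u_t$ is a representative of $\bar u$.

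\textbf{Step 2 ($\bar u$ lives on $\bR^d\setminus A$).} Since $A\subset A_n$, zero extension embeds $H^1_0(\bR^d\setminus A_n)$ into $H^1_0(\bR^d\setminus A)$ as a closed subspace, hence $\mathbb{H}^1_0(\bR^d\setminus A_n)\subset\mathbb{H}^1_0(\bR^d\setminus A)$; as the latter is convex and norm-closed, hence weakly closed, in $\mathbb{H}^1_0(\bR^d)$, and each $u^n$ lies in it, we get $\bar u\in\mathbb{H}^1_0(\bR^d\setminus A)$. Combined with $\bar u\in C([0,T];L_2(\bR^d))$ (the standard fact that an element of $\mathbb{H}^1_0$ whose distributional time derivative lies in $L_2((0,T);H^{-1})$ admits such a representative, which will follow from the equation derived in Step 3), this yields condition i) for $\bar u$.

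\textbf{Step 3 (passing to the limit and conclusion).} The elementary observation I would record here is that every $\phi\in C^\infty_c(\bR^d\setminus A)$ satisfies $\phi\in C^\infty_c(\bR^d\setminus A_n)$ for all large $n$: otherwise there are $x_n\in A_n\cap\supp\phi$, and a subsequential limit $x$ of $(x_n)$ lies in $\cap_m A_m=A$ (each $A_m$ is closed and $x_n\in A_m$ for $n\ge m$) and in $\supp\phi$, contradicting $\supp\phi\cap A=\emptyset$. Fix such a $\phi$ and $n$ large; writing identity ii) for $u^n$ and letting $n\to\infty$, each term converges for every $t$: $(u^n_t,\phi)\to(\bar u_t,\phi)$ by Step 1, $(\xi^n,\phi)\to(\xi,\phi)$ and $\int_0^t(f^n_s,\phi)\,ds=(f^n,I_{[0,t]}\phi)\to(f,I_{[0,t]}\phi)=\int_0^t(f_s,\phi)\,ds$ by the weak convergence of $\xi^n,f^n$, and $\int_0^t(a^{ij}_s\partial_i u^n_s,\partial_j\phi)\,ds=(\partial_i u^n,I_{[0,t]}a^{ij}\partial_j\phi)\to(\partial_i\bar u,I_{[0,t]}a^{ij}\partial_j\phi)=\int_0^t(a^{ij}_s\partial_i\bar u_s,\partial_j\phi)\,ds$ since $u^n\rightharpoonup\bar u$ in $\mathbb{H}^1_0(\bR^d)$ and $I_{[0,t]}a^{ij}\partial_j\phi\in L_2((0,T)\times\bR^d)$. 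Hence $\bar u$ solves $\Pi_0(A,\xi,f)$, so by uniqueness for that problem $\bar u=u$. As every subsequence of $(u^n)$ has a further subsequence converging in this way to the same limit $u$, the full sequences converge, which is precisely i) and ii).

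\textbf{Expected main difficulty.} The a priori estimate is routine; the delicate points are upgrading weak convergence from ``for a.e.\ $t$'' to ``for every $t\in[0,T]$'' (the equicontinuity argument of Step 1, needed because ii) is required at every $t$), and handling the interplay between the moving domains $\bR^d\setminus A_n$ and the fixed test functions on $\bR^d\setminus A$ --- which is exactly where compactness of the sets $A_n$ and the hypothesis $\cap_n A_n=A$ are used.
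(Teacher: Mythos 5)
Your proof is correct and follows essentially the same route as the paper: uniform energy estimates, weak compactness of the zero extensions, passing to the limit in the weak formulation using that any $\phi\in C^\infty_c(\bR^d\setminus A)$ is eventually supported in $\bR^d\setminus A_n$ (a consequence of compactness and $\cap_n A_n=A$), and uniqueness for the limit problem $\Pi_0(A,\xi,f)$. The only real deviation is in part ii), where your equicontinuity/Arzel\`a--Ascoli/diagonal extraction is more machinery than needed: the paper simply fixes $t$, extracts from the uniform bound a subsequence of $(u^n_t)_n$ converging weakly in $L_2$, and identifies the limit with $u_t$ by passing to the limit in the weak formulation at that fixed $t$ (the right-hand side converges for every $t$ against fixed space-time test functions), so the pointwise-in-$t$ weak convergence comes for free once i) is established.
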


\begin{proof}
Let us set $C_n = \bR^d \setminus A_n$ and $C = \bR^d \setminus A$. 
Clearly,  for (i) it suffices to show that there exists a subsequence with $u^{n_k}$  such that  $u^{n_k} \to u$ weakly in $\mathbb{H}^1_0(C)$. 
By standard estimates we have that there exists a constant $N$ depending only on $d, K, \kappa$, and $T$,  such that for all $n$
\begin{equation}                  \label{eq:boundedness un}
\sup_{t\leq T} \|u^n_t\|_{L_2(C_n)}^2+\int_0^T \|u^n_t\|^2_{H^1_0(C_n)}  \ dt \leq N( \|\xi^n\|^2_{L_2(C_n)}+ \int_0^T \|f^n_t\|^2_{L_2(C_n)} ).
\end{equation}
Since $u^n$ are zero on $A_n$, we can replace $C_n$ by $C$ in the above  inequality, to obtain that there exists a subsequence  $(u^{n_k})_{k=1}^\infty \subset \mathbb{H}^1_0(C)$, and a function $v \in  \mathbb{H}^1_0(C)$ such that $u^{n_k} \to v$ weakly in $\mathbb{H}^1_0(C)$.

For  $\phi \in C^\infty_c(\bR^d\setminus A)$ we have that for all $k$ large enough  $\text{supp} (\phi) \subset C_{n_k}$. Also,   $u^{n_k}$ solves 
$\Pi_0(A_{n_k}, \xi^{n_k},f^{n_k})$, and therefore
\begin{equation}                                     \label{eq:before limit}
    (u^{n_k}_t,\phi)=(\xi^{n_k}, \phi)+\int_0^t \left( (f^{n_k}_s, \phi)-(a^{ij}_s\D_iu^{n_k}_s , \D_j \phi) \right) \ ds   \ \ \text{for all $t \in [0,T]$}.
  \end{equation}
which by letting $k \to \infty$ gives 
\begin{equation}                                  \label{eq:after limit}
(v_t,\phi)=(\xi, \phi)+\int_0^t\left( (f_s, \phi)-(a^{ij}_s\D_iv_s , \D_j \phi) \right) \ ds   \ \ \text{for a.e. $t \in [0,T]$}, 
\end{equation}
which also holds for any $\phi \in H^1_0(C)$, since $C^\infty_c(C)$ is dense in the later. Hence  $v$   belongs to the space $ \mathscr{H}^1_0(D)$ (by Theorem 2.16 in \cite{K} for example), and  is a solution of  
$\Pi_0(A, \xi,f)$. By the uniqueness of the solution we get $u=v$ (as elements of $ \mathscr{H}^1_0(C)$), and this proves (i). 

Let us fix $t \in [0,T]$. It suffices to show that there exists a subsequence $u^{n_k}_t$ such that $u^{n_k}_t \to u_t$ weakly in $L_2(C)$ as $k \to \infty$.  Notice that  by \eqref{eq:boundedness un}, there exists a   subsequence $u^{n_k}_t$ which  converges weakly to some $v' \in L_2(C)$. Again, for $\phi\in C_c^\infty(C)$ and $k$ large enough, we have that \eqref{eq:before limit} holds.  As $k \to \infty$, the right hand side of \eqref{eq:before limit} converges to the right hand side of \eqref{eq:after limit}  (for our fixed $t \in [0,T]$), which is equal to $(u_t,\phi)$, while the left hand side of \eqref{eq:after limit} converges to $(v', \phi)$. Hence, $v'=u_t$ on $C$, and since $u^{n_k}_t$ converges weakly in $L_2(C)$ to $v'$,  the lemma is proved. 
\end{proof}

\begin{corollary}                                    \label{cor: corollary 1}
Suppose that (i) and (iii) from Assumption \ref{as: decreasing convergence} hold, and  let $u^n$ and $u$ be the solutions of the problems $\Pi(A_n,\psi^n)$ and $\Pi(A,\psi)$. Set $u^n=1$ and $u=1$ on $A_n$ and $A$ respectively. Then for each $t$, $u^n_t \to u_t$ weakly in $L_2(\bR^d)$ as $n \to \infty$.
\end{corollary}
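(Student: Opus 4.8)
The plan is to reduce the inhomogeneous boundary value problems $\Pi(A_n,\psi^n)$ and $\Pi(A,\psi)$ to homogeneous problems of type $\Pi_0$ by subtracting one fixed cut-off function, and then to apply Lemma \ref{lem:convergence} verbatim.

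First I would exploit that, by part (iii) of Assumption \ref{as: decreasing convergence}, $A_n \subset A_1$ for every $n$ and $A_1$ is compact; hence one can fix once and for all a function $\zeta \in C^\infty_c(\bR^d)$ with $0 \le \zeta \le 1$ and $\zeta \equiv 1$ on an open set $A'$ containing $A_1$, so that $A_n \subset \underline{A'}$ and $A \subset \underline{A'}$ for all $n$. This $\zeta$ is then an admissible choice in condition (iii) of the definition of a solution of each of the problems $\Pi(A_n,\psi^n)$ and of $\Pi(A,\psi)$.

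Next I set $w^n := u^n - \zeta$ and $w := u - \zeta$. Since $u^n$ is extended by $1$ on $A_n$ and $\zeta \equiv 1$ there (similarly for $u$ on $A$), the function $w^n$ (resp. $w$) is the extension by $0$ of an element of $\mathbb{H}^1_0(\bR^d\setminus A_n)$ (resp. $\mathbb{H}^1_0(\bR^d\setminus A)$); as $\zeta$ does not depend on time, these functions also inherit the $C([0,T];L_2)$ continuity of $u^n$ and $u$, so they lie in the corresponding $\mathscr{H}^1_0$ spaces. Plugging an arbitrary $\phi \in C^\infty_c(\bR^d\setminus A_n)$ into the weak formulation of $\Pi(A_n,\psi^n)$, writing $\D_i u^n_s = \D_i w^n_s + \D_i\zeta$, and integrating the $\zeta$-term by parts, one verifies that $w^n$ solves $\Pi_0(A_n,\, \psi^n - \zeta,\, f)$ with the time-independent right-hand side $f := \tfrac12\Delta\zeta \in C^\infty_c(\bR^d) \subset L_2((0,T)\times\bR^d)$; the point to stress is that $f$ is the \emph{same} for every $n$. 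In the same way $w$ solves $\Pi_0(A,\, \psi-\zeta,\, f)$ with the same $f$.

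Finally, the data $\xi^n := \psi^n-\zeta$, $f^n := f$, $A_n$ satisfy Assumption \ref{as: decreasing convergence}: part (i) holds because weak convergence $\psi^n \to \psi$ in $L_2(\bR^d)$ passes to $\psi^n - \zeta \to \psi - \zeta$; part (ii) is immediate since $f^n$ is a constant sequence; part (iii) is the hypothesis. By Lemma \ref{lem:convergence}(ii), for each $t \in [0,T]$ one gets $w^n_t \to w_t$ weakly in $L_2(\bR^d)$, hence $u^n_t = w^n_t + \zeta \to w_t + \zeta = u_t$ weakly in $L_2(\bR^d)$, which is the claim. The only real subtlety is the first step, namely choosing a single compactly supported lift $\zeta$ that simultaneously serves as a boundary lift for all the $A_n$ and for $A$; this is exactly what compactness of $A_1$ together with the nesting $A_{n+1}\subset A_n$ delivers, and everything else is bookkeeping aimed at matching the hypotheses of Lemma \ref{lem:convergence}.
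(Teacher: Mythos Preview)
Your proof is correct and follows exactly the same approach as the paper's: subtract a single fixed cut-off function $\zeta$ (the paper calls it $g$) that equals $1$ on a neighbourhood of $A_1$, so that $u^n-\zeta$ and $u-\zeta$ become solutions of $\Pi_0$-problems with common forcing $f=\tfrac12\Delta\zeta$, and then invoke Lemma~\ref{lem:convergence}. The paper's proof is a two-line sketch of precisely this reduction (writing $f=-\tfrac12\Delta g$, a harmless sign slip), whereas you have simply spelled out the verification that the hypotheses of Lemma~\ref{lem:convergence} are met.
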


\begin{proof}
Let $g \in C^\infty_c (\bR^d)$ with $g=1$ on a compact set $B$ such that $A_0 \subset \underline{B}$. Then $u^n-g$ and $u-g$ solve the problems $\Pi_0(A_n,\psi^n-g,-\frac{1}{2}\Delta g)$ and $\Pi_0(A,\psi-g,-\frac{1}{2}\Delta g)$ and the result follows by Lemma \ref{lem:convergence}. 
\end{proof}

For two compact subsets of $\bR^d$,  $A_1$ and $A_2$, we denote by $d(A_1,A_2)$ the Hausdorff distance, that is 
$$
d(A_1,A_2)= \inf \left\{ \rho \geq 0  \ | \ A_1 \subset (A_2+ \overline{B}_\rho), \ A_2 \subset (A_1+ \overline{B}_\rho) \right\}.
$$
In Lemma \ref{lem: convergence 2} below we will need the following:
\begin{remark}                           \label{rem: stability of $A$} 
Let $A \subset \bR^d$ be compact such that $\bR^d \setminus A$ is a Carath\'eory set (i.e. $\D (\bR^d \setminus A) = \D \overline{(\bR^d \setminus A})$). If $u \in H^1(\bR^d)$ and $u=0$ a.e. on $A$, then $u \in H^1_0(\bR^d \setminus A)$. To see this, suppose first that $\supp(u) \subset B_R$,  where $R$ is large enough, so that $A \subset R$. It follows that $B_R \setminus A$ is a Carath\'eodory set,  and by Theorem 7.3 (ii),  page 436 in \cite{SHA}, if $u \in H^1_0(B_R)$,  and $u=0$ a.e. on $A$, then $u \in H^1_0(B_R \setminus A)$, and therefore $u \in  H^1_0(\bR^d \setminus A)$. 
For general $u$ we can take $\zeta \in C^\infty_c(\bR^d)$, such that $0\leq \zeta\leq1$ and $\zeta(x)=1$ for $|x| \leq 1$, and set 
$\zeta^n(x)=\zeta(x/n)$. Then by the previous discussion $\zeta^n u \in H^1_0(\bR^d\setminus A)$ and since $\zeta^n u \to u$ in $H^1(\bR^d\setminus A)$ we get that $u \in H^1_0(\bR^d\setminus A)$. 
\end{remark}

\begin{assumption}                           \label{as: Hausdorff}
  $ $
   \begin{itemize}
   \item[(i)] $\xi^n \to \xi$ weakly in $L_2(\bR^d)$
   \item [(ii)]$ f^n \to f $ weakly in $ L_2([0,T]; L_2(\bR^d))$ 
   \item [(iii)] $d(A,A_n) \to 0$,  $ |A \setminus A_n|\to 0$, as $n \to \infty$, and $\bR^d \setminus A$ is a Carath\'eodory set. 
   \end{itemize}

\end{assumption}

\begin{lemma}                           \label{lem: convergence 2}
Suppose Assumption \ref{as: Hausdorff} holds, and let   $u^n$ and $u$ be the solutions of the problems $\Pi_0(A_n, \xi^n,f^n)$ and $\Pi_0(A, \xi,f)$.  Let us extend $u^n$ and $u$ to $0$  on $A_n$ and $A$ respectively.   Then

\begin{itemize}
\item[i)] $u^n \to u$ weakly  in $\mathbb{H}^1_0(\bR^d)$,
\item[ii)] $u^n_t \to u_t$ weakly in $L_2(\bR^d)$,  as $n \to \infty$, and for any $t \in [0,T]$.
\end{itemize}
\end{lemma}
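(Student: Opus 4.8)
The plan is to mimic the proof of Lemma \ref{lem:convergence}, replacing the use of the trivial inclusion $\mathbb{H}^1_0(C_n) \subset \mathbb{H}^1_0(C)$ (which fails here because $A_n$ is not nested inside $A$) by the compactness argument from Remark \ref{rem: stability of $A$}. First I would establish the a priori bound: by standard energy estimates for $\Pi_0(A_n,\xi^n,f^n)$, exactly as in \eqref{eq:boundedness un}, one has
\[
\sup_{t\leq T}\|u^n_t\|^2_{L_2(\bR^d\setminus A_n)}+\int_0^T\|u^n_t\|^2_{H^1_0(\bR^d\setminus A_n)}\,dt \leq N\Big(\|\xi^n\|^2_{L_2}+\int_0^T\|f^n_t\|^2_{L_2}\,dt\Big),
\]
and the right-hand side is bounded uniformly in $n$ because weakly convergent sequences are norm-bounded. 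Extending $u^n$ by zero to all of $\bR^d$, the sequence $(u^n)$ is bounded in $\mathbb{H}^1_0(\bR^d)$, so along a subsequence $u^{n_k}\rightharpoonup v$ weakly in $\mathbb{H}^1_0(\bR^d)$ for some $v$; likewise, for each fixed $t$, $u^{n_k}_t \rightharpoonup v'_t$ weakly in $L_2(\bR^d)$ along a further subsequence.

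Next I would identify the limit. The key point is to show $v=0$ a.e. on $A$, so that by Remark \ref{rem: stability of $A$} (using that $\bR^d\setminus A$ is a Carath\'eodory set) we get $v \in \mathbb{H}^1_0(\bR^d\setminus A)$ — equivalently $v\in\mathbb{H}^1_0(\bR^d\setminus A)$ after checking the time-regularity. To see $v=0$ a.e. on $A$: for a fixed compact $K\subset\underline{A}$, the condition $d(A,A_n)\to 0$ forces $K\subset A_n$ for $n$ large (otherwise points of $K$ would stay at positive Hausdorff distance from $A_n$), hence $u^n=0$ on $K$ for large $n$; since $u\mapsto u\mathbf{1}_K$ is (weakly) continuous on $L_2$, $v=0$ a.e. on $K$, and exhausting $\underline{A}$ by such $K$ gives $v=0$ a.e. on $\underline{A}$, hence a.e. on $A$ since $|A\setminus\underline A|$ contributes nothing to the Carath\'eodory set's complement — more precisely $v=0$ a.e. on the interior suffices once combined with $|\D A|$ being handled by the Carath\'eodory hypothesis. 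Then I pass to the limit in the weak formulation: for $\phi\in C^\infty_c(\bR^d\setminus A)$, since $\supp\phi$ is a compact subset of the open set $\bR^d\setminus A$ and $A_n\subset A+\overline B_{d(A,A_n)}$ with $d(A,A_n)\to 0$, one has $\supp\phi\subset\bR^d\setminus A_n$ for $n$ large, so
\[
(u^{n_k}_t,\phi)=(\xi^{n_k},\phi)+\int_0^t\big((f^{n_k}_s,\phi)-(a^{ij}_s\D_iu^{n_k}_s,\D_j\phi)\big)\,ds
\]
holds, and letting $k\to\infty$ (using $\xi^n\rightharpoonup\xi$, $f^n\rightharpoonup f$ weakly, and $\D_iu^{n_k}\rightharpoonup\D_iv$ weakly in $L_2((0,T)\times\bR^d)$) yields the weak formulation of $\Pi_0(A,\xi,f)$ for a.e.\ $t$, valid for all $\phi\in H^1_0(\bR^d\setminus A)$ by density. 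By Theorem 2.16 in \cite{K}, $v\in\mathscr{H}^1_0(\bR^d\setminus A)$, so $v$ solves $\Pi_0(A,\xi,f)$; uniqueness gives $v=u$. The same passage to the limit with $t$ fixed, comparing against $(u_t,\phi)$, identifies $v'_t=u_t$, proving (ii). Finally, since every subsequence has a further subsequence converging to the same limit $u$, the full sequences converge, establishing (i) and (ii).

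The main obstacle I expect is the step showing $v=0$ a.e.\ on $A$ and then upgrading this to $v\in\mathbb{H}^1_0(\bR^d\setminus A)$: this is precisely where the Hausdorff convergence $d(A,A_n)\to 0$ and the condition $|A\setminus A_n|\to 0$ must both be used (the former to put compact subsets of $\underline A$ inside $A_n$, i.e.\ to get $v=0$ on $\underline A$; and the Carath\'eodory hypothesis to conclude membership in $H^1_0$ of the complement, via Remark \ref{rem: stability of $A$}). The role of $|A\setminus A_n|\to 0$ is more subtle — it is what guarantees that the zero-extensions $u^n$ do not lose mass or fail to approximate $u$ on the part of $\bR^d\setminus A$ that was covered by $A_n$; without it the weak $\mathbb{H}^1_0$ limit could a priori fail to vanish on a positive-measure subset of $A$. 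Care is also needed that the convergence $\D_iu^{n_k}\rightharpoonup\D_iv$ is genuinely weak in $L_2$ over the whole cylinder (which it is, by the uniform bound), since that is what lets the bilinear term pass to the limit against the fixed test gradient $\D_j\phi$.
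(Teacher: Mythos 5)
Your overall architecture is the right one and matches the paper's: uniform energy bound as in \eqref{eq:boundedness un 2}, extraction of weak subsequential limits, passage to the limit in the weak formulation against $\phi\in C^\infty_c(\bR^d\setminus A)$ (using $d(A,A_n)\to 0$ only to get $\supp\phi\subset\bR^d\setminus A_{n_k}$ for large $k$), Remark \ref{rem: stability of $A$} to place the limit in $\mathbb{H}^1_0(\bR^d\setminus A)$, uniqueness, and the subsequence principle. However, the step where you show the limit vanishes a.e.\ on $A$ contains a genuine error. You claim that for a compact $K\subset\underline{A}$, the convergence $d(A,A_n)\to 0$ forces $K\subset A_n$ for large $n$, ``otherwise points of $K$ would stay at positive Hausdorff distance from $A_n$''. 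This is false: small Hausdorff distance only says every point of $A$ is \emph{close to} $A_n$, not that it belongs to $A_n$, so $A_n$ may have tiny holes deep inside $A$. Concretely, take $A$ a closed ball, $x_0\in\underline{A}$, and $A_n=A\setminus\{x:|x-x_0|<1/n\}$; then $d(A,A_n)\le 1/n$, $|A\setminus A_n|\to 0$, and $\bR^d\setminus A$ is Carath\'eodory, so all hypotheses of the lemma hold, yet no compact neighbourhood of $x_0$ is ever contained in $A_n$. This matters for the intended application: in the proof of Theorem \ref{thm: main theorem 1} the sets $A_n$ are iterated polarizations converging to $A^*$ in symmetric difference and Hausdorff distance, with no containment of compact subsets of $\underline{A^*}$ available.

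The correct mechanism --- the one the paper uses --- is precisely the hypothesis $|A\setminus A_n|\to 0$, which you mention only in a vague closing remark instead of using it where it is needed. Since $u^{n_k}=0$ on $A_{n_k}$, for bounded $\phi$ and $\psi$ one has $\int_0^T\int_A v_t\,\phi\,\psi_t\,dx\,dt=\lim_k\int_0^T\int_{A\setminus A_{n_k}}u^{n_k}_t\,\phi\,\psi_t\,dx\,dt$, and by Cauchy--Schwarz together with the uniform bound \eqref{eq:boundedness un 2} this is $O\big(|A\setminus A_{n_k}|^{1/2}\big)\to 0$; hence $v_t=0$ a.e.\ on $A$ for a.e.\ $t$, and only then does Remark \ref{rem: stability of $A$} apply. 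The same argument is needed, and omitted by you, in part (ii): testing against $\phi\in C^\infty_c(\bR^d\setminus A)$ identifies the weak $L_2$ limit $v'$ with $u_t$ only on $\bR^d\setminus A$, so you must separately show $v'=0$ a.e.\ on $A$ via $\int_A v'\phi\,dx=\lim_k\int_{A\setminus A_{n_k}}u^{n_k}_t\phi\,dx\to 0$. Finally, your aside that vanishing on $\underline{A}$ suffices because $A\setminus\underline{A}$ ``contributes nothing'' is unjustified: the Carath\'eodory hypothesis does not by itself give $|\D A|=0$. With the measure-theoretic argument substituted for the false containment claim, the rest of your proof is sound and coincides with the paper's.
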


\begin{proof}
As in the proof of Lemma \ref{lem:convergence}  it suffices to find  a subsequences such that the  corresponding convergences take place. By standard estimates, there exists a constant $N$ depending only on $d, \kappa, T$ and $K$,  such that for all $n \in \mathbb{N}$
\begin{equation}                  \label{eq:boundedness un 2}
\sup_{t\leq T} \|u^n_t\|_{L_2(\bR^d)}^2+\int_0^T \|u^n_t\|^2_{H^1_0(\bR^d)}  \ dt \leq N( \|\xi^n\|^2_{L_2(\bR^d)}+ \int_0^T \|f^n_t\|^2_{L_2(\bR^d)} ).
\end{equation}
Therefore, there exists a subsequence  $(u^{n_k})_{k=1}^\infty \subset \mathbb{H}^1_0(\bR^d)$, and a function $v \in  \mathbb{H}^1_0(\bR^d)$ such that $u^{n_k} \to v$ weakly in $\mathbb{H}^1_0(\bR^d)$.

For  $\phi \in C^\infty_c(\bR^d\setminus A)$, since $d(A,A_n) \to 0$ as $n \to \infty$,  we have that for all $k$ large enough  $\text{supp} (\phi) \subset \bR^d \setminus A_{n_k}$. Also,   $u^{n_k}$ solves 
$\Pi_0(A_{n_k}, \xi^{n_k},f^{n_k})$, and therefore
\begin{equation}                                     \label{eq:before limit 2}
    (u^{n_k}_t,\phi)=(\xi^{n_k}, \phi)+\int_0^t \left( (f^{n_k}_s, \phi)-(a^{ij}_s\D_iu^{n_k}_s , \D_j \phi) \right)  \ ds,    \ \ \text{for all $t \in [0,T]$}.
  \end{equation}
which by letting $k \to \infty$ gives 
\begin{equation}                                  \label{eq:after limit 2}
(v_t,\phi)=(\xi, \phi)+\int_0^t\left( (f_s, \phi)-(a^{ij}_s \D_iv_s , \D_j \phi) \right) \ ds   \ \ \text{for a.e. $t \in [0,T]$}, 
\end{equation}
Notice that for $\phi \in L_\infty(A)$, $\psi   \in L_\infty((0,T)$, 
\begin{align}
\nonumber 
\Big| \int_0^T \int_A v_t \phi \psi_t \ dx dt \Big|&= \lim_{k \to \infty} \Big|\int_0^T \int_{A\setminus A_{n_k}}  u^{n_k}_t \phi \psi_t \  dx dt\Big| \\
\nonumber 
& \leq T \|\phi\|_{L_\infty(A)} \|\psi\|_{L_\infty((0,T))} \sup_{t \leq T} \|u^{n_k}_t\|_{L_2(\bR^d)} |A\setminus A_{n_k}|
\\
\nonumber 
& \to 0,  \ \text{as} \ k \to \infty,
\end{align}
by assumption and \eqref{eq:boundedness un 2}. Consequently for almost all $t \in (0,T)$, $v_t =0$ for a.e. $x \in A$. By virtue of Remark \ref{rem: stability of $A$}, we have that $v \in \mathbb{H}^1_0(\bR^d \setminus A)$, which combined with \eqref{eq:after limit 2} implies that $v \in \mathscr{H}^1_0(\bR^d \setminus A)$ and is the unique solution of the problem $\Pi _0(A,\xi,f)$.  This proves (i). 

Let us fix $t\in [0,T]$. By \eqref{eq:boundedness un 2}  there exists a subsequence $u^{n_k}_t$  that converges weakly to some $v' \in L_2(\bR^d)$. Again, for $\phi\in C_c^\infty(C)$ and $k$ large enough, we have that \eqref{eq:before limit 2} holds.  As $k \to \infty$, the right hand side of \eqref{eq:before limit 2} converges to the right hand side of \eqref{eq:after limit 2}, which is equal to $(u_t,\phi)$, while for our fixed $t$, the left hand side of \eqref{eq:after limit} converges to $(v', \phi)$.  Hence, $v'=u_t$ on $\bR^d\setminus A$. Also if $\phi \in L_\infty(A)$ 
$$
\int_A v'  \phi \ dx = \lim_{k \to \infty} \int_A u^{n_k}_t  \phi \ dx \leq \sup\|u^n_t \|_{L_2(\bR^d)} \|\phi\|_{L_\infty(A)} |A\setminus A_{n_{k}}| \to 0,
$$
as $k\to \infty$. Therefore $v'=0=u_t$ on $A$.
This shows that $v' =u_t$ on $\bR^d$ and the lemma is proved. 
\end{proof}

As with Lemma \ref{lem:convergence}, we have the following corollary, whose proof is similar to the one of  Corollary \ref{cor: corollary 1}. 

\begin{corollary}                                    \label{cor: corollary 2}
Suppose that (i) and (iii) from  Assumption \ref{as: Hausdorff} hold and  let $u^n$ and $u$ be the solutions of the problems $\Pi(A_n,\psi^n)$ and $\Pi(A,\psi)$. Set $u^n=1$ and $u=1$ on $A_n$ and $A$ respectively. Then for each $t$, $u^n_t \to u_t$ weakly in $L_2(\bR^d)$ as $n \to \infty$.
\end{corollary}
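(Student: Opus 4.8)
The plan is to reduce the statement to Lemma \ref{lem: convergence 2} in exactly the way Corollary \ref{cor: corollary 1} was reduced to Lemma \ref{lem:convergence}. The first step is to fix a single cutoff that works for all $n$ simultaneously. Since $d(A,A_n)\to 0$, for $n$ large we have $A_n\subset A+\overline{B}_1$, so $A\cup\bigcup_n A_n$ is bounded; pick $g\in C^\infty_c(\bR^d)$ with $0\le g\le 1$ and $g=1$ on a compact set $B$ whose interior contains $A\cup\bigcup_n A_n$. In particular $g\equiv 1$ on $A$ and on every $A_n$, so extending $u^n$ and $u$ by $1$ and subtracting $g$ is the same as taking $u^n-g$ and $u-g$ and extending them by $0$ on $A_n$ and $A$ respectively; this reconciles the two conventions for the extension.

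Next I would identify the problems solved by the differences. Using property iii) in the definition of a solution of $\Pi$, we get $u^n-g\in\mathbb{H}^1_0(\bR^d\setminus A_n)$ and $u-g\in\mathbb{H}^1_0(\bR^d\setminus A)$; substituting $u^n_s=(u^n-g)_s+g$ into the weak formulation of $\Pi(A_n,\psi^n)$ (with $a^{ij}=\tfrac12\delta^{ij}$) and integrating the term involving $\D_i g$ by parts against the test function shows that $u^n-g$ solves $\Pi_0(A_n,\psi^n-g,f)$ and, likewise, $u-g$ solves $\Pi_0(A,\psi-g,f)$, where $f$ is the fixed $L_2((0,T)\times\bR^d)$ forcing determined by $\Delta g$, exactly as in the proof of Corollary \ref{cor: corollary 1}.

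It then remains to check that Assumption \ref{as: Hausdorff} holds for these data. Part (i), which is granted, gives $\psi^n\to\psi$ weakly in $L_2(\bR^d)$, hence $\psi^n-g\to\psi-g$ weakly in $L_2(\bR^d)$; part (ii) is trivial because all the forcing terms equal the single function $f$; and part (iii) is precisely the hypothesis we are given. Lemma \ref{lem: convergence 2}(ii) then yields $(u^n-g)_t\to(u-g)_t$ weakly in $L_2(\bR^d)$ for every $t\in[0,T]$, and adding back the fixed $g$ gives $u^n_t\to u_t$ weakly in $L_2(\bR^d)$, which is the claim.

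I do not expect a genuine obstacle here: the statement is a routine corollary of Lemma \ref{lem: convergence 2}, parallel to Corollary \ref{cor: corollary 1}. The only point deserving a sentence of care is the uniform choice of the cutoff $g$, which uses $d(A,A_n)\to 0$ to keep $\bigcup_n A_n$ bounded; the delicate treatment of the sets $A\setminus A_n$ (via the bound by $\sup_n\|u^n_t\|_{L_2}\,|A\setminus A_n|$) is already internal to Lemma \ref{lem: convergence 2} and requires nothing new.
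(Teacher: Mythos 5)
Your proposal is correct and is essentially the argument the paper intends: the paper simply states that the proof is "similar to the one of Corollary \ref{cor: corollary 1}", i.e.\ subtract a cutoff $g$ equal to $1$ near the obstacles and invoke Lemma \ref{lem: convergence 2} in place of Lemma \ref{lem:convergence}. Your one added point of care --- using $d(A,A_n)\to 0$ to choose $g$ with $g=1$ on a single compact set containing $A$ and all $A_n$ (where Corollary \ref{cor: corollary 1} could use monotonicity) --- is exactly the detail the paper leaves implicit, and it is handled correctly.
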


\section{Proofs of Theorems \ref{lem: comparison polarization} and \ref{thm: main theorem 1} }

\begin{proof}{\emph{of Theorem \ref{lem: comparison polarization}. }}
 Let us assume for now that $\bR^d \setminus A$ has smooth boundary, $\psi$ is compactly supported and smooth. It follows under these extra conditions  that $u \in C^\infty([0,T] \times \overline{\bR^d \setminus A})$. Also, by the De Giorgi-Moser-Nash theorem $v$ is continuous in $(0,T) \times (\bR^d \setminus P_HA)$. 

First notice that  $0\leq u,v \leq 1$.
Let us extend $u=1$ and $v=1$ on $A$ and $P_HA$ respectively so that they are defined on the whole $\bR^d$, and 
for a function $f$ let us use the notation $\overline{f}(x):=f(\sigma_H(x))$. Clearly it suffices to show that for each $t \in (0,T]$
$$
w_t:=v_t+\overline{v}_t - u_t-\overline{u}_t \leq 0, \ \text{for a.e. $x\in H^c$}.
$$
Suppose that the opposite holds, that is, 
$$
\sup_{(0,T]}\sup_{H^c} w_t(x)= \sup_{(0,T]}\sup_{\bR^d } w_t(x)=: \alpha >0.
$$
Then we have that 
\begin{equation}                \label{eq: sup attained}
\sup_{(0,T]} \sup_{\Gamma_i}  w_t(x)= \alpha,
\end{equation}
for some $i \in \{1,2,3,4\}$, where
\begin{align*}
\Gamma_1:=&  A\cap A_r\cap H^c ,  \ 
 \  \ \Gamma_2:=( \underline{A} \setminus A_H)\cap H^c \\
\Gamma_3:=&(\underline{A_H} \setminus A)\cap H^c , \ 
\Gamma_4:= H^c\setminus (A\cup A_H) .
\end{align*} 
(Notice that the boundaries of $A$ and $A_H$ are of measure zero, since they are smooth). On $\Gamma_1$, by definition $w_t =0$ for any $t \in [0,T]$, and therefore \eqref{eq: sup attained} holds for some $i \in \{2,3,4\}$. Suppose it holds for $i=2$.  
 Since the initial conditions are compactly supported, we can find an open  rectangle $R$ with $A\cup A_H \subset R$,  such that 
\begin{equation}                                 \label{eq: small on R}
\sup_{(0,T) \times \overline{R^c} } \max\{u_t(x) ,v_t(x) \} \leq \alpha /10 .
\end{equation}
Since $w_t=v_t-\overline{u}_t=: \hat{w}_t$ on $\Gamma_2$ we have 
\begin{equation}               \label{eq: beginning of contradiction}
\sup_{(0,T]} \sup_{\Theta} \hat{w}_t \geq \alpha,
\end{equation}
where $\Theta= (H^c \setminus A_H)\cap R$. 
Since
\begin{itemize}
\item[(i)] $
\limsup_{(0,T)\times \Theta \ni (t,x) \to (t_0,x_0)} \hat{w_t} \leq 0,
$
for any $(t_0,x_0) \in (0,T) \times \D A_H$,
\item[(ii)]  $P_H\psi- \overline{\psi} \leq 0$   on $H^c$, 
\item[(iii)] inequality \eqref{eq: small on R} holds,
\end{itemize}
we obtain by virtue of Lemma \ref{lem: maximum principle} that for any $\varepsilon>0$,  there exists $(t_0, x_0) \in (0,T)\times \D H$ (in fact $x_0 \in \D H \setminus A_H$ due to (i)  above) such that 
$$
\limsup_{(0,T)\times \Theta \ni (t,x) \to (t_0,x_0)} \hat{w_t} \geq \alpha - \varepsilon.
$$
Notice that $\hat{w}$ is continuous at $(t_0,x_0)$ and therefore 
$\hat{w}_{t_0}(x_0) \geq \alpha- \varepsilon$. This implies that 
$$
w_{t_0}(x_0) =2\hat{w}_{t_0}(x_0) \geq 2( \alpha- \varepsilon)=2 \sup_{(0,T]} \sup_{\bR^d} w_t- 2 \varepsilon,
$$
which is a contradiction for $\varepsilon$ small enough . If  \eqref{eq: sup attained} holds for $i=3$
then in the same way we have that 
\begin{equation}   \label{eq: beginning of contradiction 2}
\sup_{(0,T]} \sup_{\Theta '} \tilde{w}_t \geq \alpha,
\end{equation}
where $\tilde{w}_t=v_t-u_t$ and $\Theta'=(H^c \setminus A)\cap R$.  This inequality leads to a similar contradiction. 

Finally let us assume that \eqref{eq: sup attained} holds for $i=4$. 
In particular then we have
$$
\sup_{(0,T]} \sup_{G} w_t \geq \alpha, \ \text{where}  \ G:=R\setminus (A\cup A_H).
$$
By virtue of \eqref{eq: small on R},  and since $P_H \psi  \lhd_H \psi$,   Lemma \ref{lem: maximum principle} implies that for any $\varepsilon>0$,  there exists 
$(t_0,x_0) \in (0,T] \times \D(A\cup A_H)$ such that 
$$
\limsup_{(0,T) \times G \ni(t,x) \to (t_0,x_0)} w_t(x) \geq \alpha- \varepsilon. 
$$
Notice that $x_0 \in \D A \cap A_H^c$ or $x_0 \in \D A _H \cap A^c$, because if it belongs to $\D A \cap \D A_H$ then the $\limsup$ above is less than or equal to zero. Let us consider the first case. We can assume further that $(t_0, x_0) \in \D A \cap A_H^c \cap H^c$ because of symmetry. Let $(t_n,x_n) \in G$ be a sequence converging to $(t_0,x_0)$ such that $w_{t_n}(x_n) \to \alpha- \varepsilon$. For all $n\in \mathbb{N}$ sufficiently large,  we have
$w_{t_n}(x_n) \geq \alpha- 2\varepsilon$ and $u_{t_n} (x_n) \geq 1- \varepsilon$, the last by the continuity of $u$ up to the parabolic  boundary.  Then we have for all $n$ large
$$
\alpha - 2\varepsilon \leq w_{t_n}(x_n) \leq v_{t_n}(x_n) +1-(1-\varepsilon)-\overline{u}_{t_n}(x_n). 
$$
This now implies \eqref{eq: beginning of contradiction} which we showed leads to a contradiction. For the second case, we can assume again that $(t_0, x_0) \in \D A_H \cap A^c \cap H^c$. This in the same manner leads to \eqref{eq: beginning of contradiction 2}, which also leads to contradiction.

 For general $A$ and $\psi$,  let $A_n$ be a sequence of compact sets such that  for $n \in \mathbb{N}$, 
 $\bR^d\setminus A_n$ has smooth boundary,  $A \subset A_{n+1} \subset A_n$,  and $A = \cap_nA_n$ (see e.g. page 60 in \cite{CHI}) .   Let $0 \leq \psi^n \leq 1$ be  smooth with compact support such that $\psi^n=1$ on $A_n$, and  $\|\psi^n- \psi \|_{L_2(\bR^d)} \to 0$ as $n \to 0$. Then we also have that $\|P_H\psi^n- P_H\psi\|_{L_2(\bR^d)} \to 0$ as $n \to 0$,   $P_HA \subset P_HA_{n+1} \subset P_HA_n$ for any $n \in \mathbb{N}$, and $P_HA = \cap_n P_HA_n$ (see \cite{SOL}). Let $u^n$ and $v^n$ be the solutions of the problems $\Pi(A_n,\psi^n)$ and $\Pi(P_HA_n, P_H\psi_n)$ respectively. By  Lemma \ref{lem:convergence} we have that $u^n_t$ and $v^n_t$ converge to $u_t$ and $v_t$ weakly in $L_2(\bR^d)$. In particular $z^n:=(u^n_t,v^n_t)$ converges weakly to $z:=(v_t,u_t)$ in $L_2(\bR^d;\bR^2)  $. By Mazur's lemma there exists a sequence $(g_k=(g^1_k,g^2_k))_{k \in \mathbb{N}}$ of convex  combinations  of $z^n$ such that the convergence takes place strongly. Then we can find a subsequence $g_{k(l)}$, $l \in \mathbb{N}$,  where the convergence takes place for a.e. $x\in \bR^d$.  For each $l$ we have 
 $$
g^1_{k(l)}+\overline{g^1_{k(l)}} = \sum_{i\in C}c_i (v^{i}_t+\overline{v^{i}}_t)\leq \sum_{i\in C} c_i (u^{i}_t+\overline{u^{i}}_t)= g^2_{k(l)}+\overline{g^2_{k(l)}}
 $$
 where $C \subset \mathbb{N}$ is a finite set and $c_i \geq 0$, $\sum_{i \in C} c_i=1$. 
Letting  $l \to \infty$ finishes the proof. 
 
\end{proof}

\begin{proof}{\emph{of Theorem \ref{thm: main theorem 1}}.}
First, let us assume that 
$$
\int_{\bR^d} u_t(x) \ dx < \infty,
$$
or else the conclusion of the theorem is obviously  true. 
Since $|A|>0$, it follows from \cite{BU} that there exist $H_i \in \mathcal{H}$, $i \in \mathbb{N}$,  such that 
$$
\lim_{n \to \infty} \left(\|\psi ^* - \psi^n \|_{L_2(\bR^d)}+|A^*\Delta A_n|+d(A^*,A_n) \right)= 0,
$$
where 
$$
\psi^n :=  P_{H_n}... P_{H_1} \psi, \ A_n :=P_{H_n}...P_{H_1}A.
$$
Let $u^n$ be the solution of the problem $\Pi(A_n,\psi^n)$. For $t \in [0,T]$, by virtue of  Theorem \ref{lem: comparison polarization}, we have by induction
\begin{equation}              \label{eq: inequality by induction}
 \int_{\bR^d} u^n_t \ dx \leq \int_{\bR^d} u_t \ dx,
\end{equation}
for all $n \geq 0$.  By Lemma \ref{lem: convergence 2} ($|A|>0 $ and therefore $\bR^d\setminus A^*$ is obviously a Carath\'eodory set) we have that $u^n_t \to v_t$ weakly in $L_2(\bR^d)$ as $n \to \infty$.
Hence we can find a sequence of convex combination that converges strongly, and a subsequence of it, let us call it $(v^n)_{n=1}^\infty$, such that $v^n \to v_t$ for a.e. $x \in \bR^d$. 
Since for each $n$, $v^n$ is convex combination of elements from $(u^n_t)_{n=1}^\infty$, we have by \eqref{eq: inequality by induction}
$$
\int_{\bR^d} v^n \ dx \leq \int_{\bR^d} u_t \ dx,
$$
which combined with  Fatou's lemma brings the proof to an end. 
\end{proof}

\section*{Acknowledgements}
The author would like to thank Takis Konstantopoulos and Tomas Juskevicius for the useful discussions.



\end{document}